\newtheorem{thm}{Theorem}[section]
\newtheorem*{thmA}{Theorem~A}
\newtheorem*{thmM}{Main Theorem}
\newtheorem{df}[thm]{Definition}
\newtheorem{prop}[thm]{Proposition}
\newtheorem{cor}[thm]{Corollary}
\newtheorem{lem}[thm]{Lemma}
\newtheorem{ex}[thm]{Example}
\newtheorem{rem}[thm]{Remark}
\newtheorem{claim}{Claim}
\newcommand{\Pre}{\operatorname{Preper}}
\newcommand{\Per}{\operatorname{Per}}
\newcommand{\pp}{\mathbb{P}}
\newcommand{\cc}{\mathbb{C}}
\newcommand{\qq}{\mathbb{Q}}
\newcommand{\af}{\mathbb{A}}
\newcommand{\ox}{\mathcal{O}}
\newcommand{\Rat}{\operatorname{Rat}}
\newcommand{\Mor}{\operatorname{Mor}}
\newcommand{\PGL}{\operatorname{PGL}}
\newcommand{\Com}{\operatorname{Com}}
\begin{document}

\title
{Finiteness of commutable maps of bounded degree}
\author{Chong Gyu Lee}

\author{Hexi Ye}

\keywords{height, preperiodic point, commutable polynomial maps}

\subjclass{Primary: 37P05, 37P35 Secondary: 11C08, 37F10}

\date{\today}

\address{Department of Mathematics, Statistics and Computer Science, University of Illinois at Chicago, Chicago, IL, 60607, US}

\email{phiel@math.uic.edu}

\address{Department of Mathematics, Statistics and Computer Science, University of Illinois at Chicago, Chicago, IL, 60607, US}

\email{hye4@uic.edu}

\maketitle

\begin{abstract}
     In this paper, we study the relation between two dynamical systems $(V,f)$ and $(V,g)$ with $f\circ g = g \circ f$. As an application, we show that an endomorphism (respectively a polynomial map with Zariski dense, of bounded $\Pre(f)$) has only finitely many endomorphisms (respectively polynomial maps) of bounded degree which are commutable with $f$.
\end{abstract}

\section{Introduction}

     A dynamical systems $(V,f)$ consists of a set $V$ and a self map $f:V \rightarrow V$. If $V$ is a subset of a projective space $\pp^n$ defined over a finitely generated field $K$ over $\mathbb{Q}$, then we have arithmetic height functions on $V$, which make a huge contribution to the study of $(V,f)$. In particular, if $f$ is an endomorphism on $\pp^n$ or regular polynomial automorphism, then we have lots of results, basic height equality \cite{N, Le, K2}, the canonical height function \cite{CS, K2} and the equidistribution of periodic points \cite{Y}.

     In this paper, we show the following interesting results by studying arithmetic relations between two dynamical systems $(V,f)$ and $(V,g)$ with $f \circ g = g \circ f$.
    \begin{thmM}[Theorem~\ref{thmA}, \ref{thmB}]
    Let $M$ be the set of endomorphisms on $\pp^n$ (respectively polynomial maps on $\af^n$), defined over $\cc$. Suppose that we pick $f \in M$ such that $\Pre(f)$ is of bounded height and Zariski dense. Then,
    \[
    \Com(f,d): = \{ g \in M ~|~ f\circ g =  g \circ f, ~\deg g = d\}
    \]
    is a finite set.
    \end{thmM}

    Theorem~M for the morphism case is a generalization of some known result. If $d=1$, $\Com(f,1)$ is called the group of automorphisms of $f$ (See Corollary~\ref{autoend}). When $d>1$, Theorem M is a generalization of the result of Kawaguchi and Silverman \cite{KS}: they proved that a sequence of endomorphisms $\langle \phi_m \rangle$, defined over $\overline{\qq}$, sharing the same canonical height must satisfy either $\deg \phi_m$ or $[\qq(\phi_m):\qq]$ goes to infinity. ($\mathbb{Q}(\phi_m)$ is a field generated by all coefficient of $\phi_m$ over $\qq$.) In particular, endomorphisms commuting each other have the same canonical height function and hence we can see that a sequence of endomorphisms which commute with a given endomorphism must satisfy $\lim \deg \phi_m = \infty$. Also, Theorem~M is related to known results of Ritt \cite{R} and Dinh-Sibony \cite{Di, DS}, the classification of commutable polynomial maps. The main theorem supports Dinh-Sibony's classification: each class only contain finitely many element of bounded degree.

    This paper has two main ingredients: the first one is the relation between the sets of preperiodic points of commutable maps $f$ and $g$. In Section~2, we study the relation between $(V,f)$ and $(V,g)$ when $f,g$ are commutable. In particular, if $f,g$ are commutable, then $g$ becomes a self map on $\Pre(f)$ preserving the exact period. (See Proposition~\ref{cm}.)

    The second ingredient is a finite set of points in general position of degree $d$: it is a generalization of points in general position for general linear maps. We can find a finite set of points which exactly determine a rational map on $\pp^n$ of fixed degree $d$:

    \begin{thmA}
        Let $d$ be a positive integer. Then, there is a set $S_d$ of finite points such that for any endomorphism on $\pp^n$ (respectively polynomial map on $\af^n$) of degree $d$ is exactly determined by image of $S_d$.
    \end{thmA}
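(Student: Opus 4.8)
The plan is to view a degree-$d$ endomorphism of $\pp^n$ as a point of a projective space: $\phi=[F_0:\cdots:F_n]$ where $F_0,\dots,F_n$ are homogeneous forms of degree $d$ with no common zero. If $\psi=[G_0:\cdots:G_n]$ is another such map, then ``$\phi$ and $\psi$ take the same value at $P$'' means that the vectors $(F_j(P))_{j}$ and $(G_j(P))_{j}$ are proportional in $\cc^{n+1}$, i.e.\ that every $2\times2$ minor
\[
M_{ij}:=F_iG_j-F_jG_i\qquad(0\le i<j\le n)
\]
vanishes at $P$. Each $M_{ij}$ is homogeneous of degree $2d$ (or identically zero). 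So it is enough to build a finite set $S_d\subset\pp^n$ whose only vanishing form of degree $2d$ is the zero form; for such $S_d$, agreement of $\phi$ and $\psi$ on all of $S_d$ forces every $M_{ij}$ to vanish identically.

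I would then construct $S_d$ by a standard interpolation argument: the space $W$ of degree-$2d$ forms in $x_0,\dots,x_n$ is finite dimensional, and the forms vanishing at a given point cut out a hyperplane of $W$; since the intersection of all of these hyperplanes, over all points of $\pp^n$, is $\{0\}$, finitely many points already suffice --- one may take $\binom{n+2d}{n}$ points in general position, or concretely the image in $\pp^n$ of the grid $\{0,1,\dots,2d\}^n\subset\af^n$, on which no nonzero polynomial of total degree $\le 2d$ vanishes. The same $S_d$ settles the affine case immediately: if $f=(f_1,\dots,f_n)$ and $g=(g_1,\dots,g_n)$ are polynomial maps on $\af^n$ of degree $d$ that agree on $S_d$, then each $f_k-g_k$ has degree $\le d\le 2d$ and vanishes on $S_d$, hence is zero, so $f=g$; here no minors are needed because there is no projective scaling to account for.

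To complete the morphism case, suppose $\phi$ and $\psi$ of degree $d$ agree on $S_d$, so all the $M_{ij}$ vanish identically and the $2\times(n+1)$ matrix $\left(\begin{smallmatrix}F_0&\cdots&F_n\\G_0&\cdots&G_n\end{smallmatrix}\right)$ has rank $\le 1$ over $\cc(x_0,\dots,x_n)$. Because $\phi$ is a morphism, the $F_j$ have no common zero and hence no common irreducible factor, so the first row is a nonzero vector; the same holds for the second row, and therefore the two rows are proportional over $\cc(x_0,\dots,x_n)$: there is $\Lambda\in\cc(x_0,\dots,x_n)^\times$ with $G_j=\Lambda F_j$ for all $j$. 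Writing $\Lambda$ in lowest terms and comparing degrees (all $F_j,G_j$ have degree $d$) shows its numerator and denominator have equal degree, and the coprimality of the $F_j$ (resp.\ of the $G_j$) forces both to be constant; thus $\Lambda\in\cc^\times$ and $\psi=\phi$.

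The only real content is the passage from the \emph{pointwise} proportionality recorded by the images $\{\phi(P):P\in S_d\}$ to \emph{global} proportionality of the coordinate forms; this is exactly what the minor identities together with the ``no common factor'' property of morphisms provide, and I expect the bookkeeping there --- rather than the interpolation step, which is classical --- to be where care is required.
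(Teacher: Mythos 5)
Your proof is correct, and for the polynomial-map case it is essentially the same interpolation idea as the paper's: the paper uses the Veronese map $\tau:\cc^n\to\pp^{N-1}$ on all monomials of degree $\le d$, picks $N$ points $P_1,\dots,P_N$ whose $\tau$-images are linearly independent (possible because $\tau(\cc^n)$ is not contained in finitely many hyperplanes), and observes that the resulting $N\times N$ matrix $\tau(S_d)$ is invertible so that the coefficients of $g$ are linear functions of the values $g_i(P_j)$. Your grid $\{0,1,\dots,2d\}^n$ is a concrete choice that achieves the same thing (with more points than the minimal $N=\binom{n+d}{n}$, since you sized it for degree $2d$).

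Where you genuinely differ from the paper is the endomorphism case. The paper's Section~4 proof covers only polynomial maps; for endomorphisms the paper switches in Section~3 to a structural argument on the variety $\Mor_d^n\subset\pp^N$ (commutation cuts out a closed subvariety, and each irreducible component collapses to a single map because $\Per_m(\phi)$ is finite and Zariski dense), without ever constructing a determining set of points. You instead push the interpolation argument through projectively: agreement of $\phi=[F_0:\cdots:F_n]$ and $\psi=[G_0:\cdots:G_n]$ on $S_d$ forces each $2\times2$ minor $F_iG_j-F_jG_i$ (degree $2d$) to vanish on $S_d$ hence identically, giving proportionality over $\cc(x_0,\dots,x_n)$, and the ``no common zero $\Rightarrow$ no common factor'' property of morphisms pins the ratio down to a nonzero scalar. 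This yields a genuine points-in-general-position statement for endomorphisms, which the paper's Theorem~A asserts but does not actually prove by this route. The trade-off: your argument is elementary and self-contained and gives the stated result in both cases uniformly, while the paper's Section~3 method (via $\Mor_d^n$ and Zariski density of periodic points) gives additional structural information about $\Com(\phi,d)$ as a variety that a pure interpolation argument does not.

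One small caveat you should make explicit: the vanishing of the minors on $S_d$ only forces them to vanish identically because each $M_{ij}$ is homogeneous of degree exactly $2d$ (or identically zero), so your $S_d$ must be chosen to kill degree-$2d$ forms, not degree-$d$ forms; you did size it correctly, but the degree bookkeeping is the one place a reader could stumble.
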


    Theorem~\ref{det2} claims that we can find points in general position of any degree if $\Pre(f)$ is Zariski dense. Moreover, if $\Pre(f)$ is of bounded height, then we only have finitely many preperiodic points of given period. Therefore, we only have a finite set of points which contains image of $S_d$ and hence we can complete the proof of the Main Theorem.

    In case of endomorphism, we know $\Pre(f)$ is of bounded height and Zariski dense, thanks to Call-Silverman \cite{CS} and Fakhruddin \cite{Fa}. For polynomial maps, some special examples are known - regular polynomial automorphism \cite{De, M, Le8} and Cartesian product of polynomials of degree at least 2 \cite{MS}. We introduce new example, polynomial maps of small $D$-ratio. (See Theorem~\ref{Zdense}.)

    Thanks to Thomas Scanlon, we have another proof for the endomorphism case using the space of endomorphisms on $\pp^n$. We use the original proof for Theorem~\ref{thmB} and introduce Scanlon's proof for Theorem~\ref{thmA}.

\par\noindent\emph{Acknowledgements}.\enspace
We would like to thank Thomas Scanlon, especially for his help in Section~3. We also thank Laura DeMarco and Shu Kawaguchi for her helpful discussions and comments.

\section{Commuting maps and their dynamical systems}

    In this section, we study dynamical systems of commutable maps, focused on the relation between the sets of preperiodic points of commutable maps. We start with the definition of dynamical sets.
    \begin{df}
        Let $f : V \rightarrow V$. We define the following dynamical sets.
        \[
        \begin{array}{l}
        \ox_f(P): = \{ f^m(P) ~|~ m=0, 1,2, \cdots \} \\
        \Per_m(f): = \{ P \in V ~|~ f^m(P) = P\} \\
        \Per(f) :=  \displaystyle \bigcup_{m=1}^\infty \Per_m(f) \\
        \Pre_{m,l}(f) := \{ P\in V ~|~ f^{m}(P) = f^{l}(P) ~\text{for some}~m>l\geq 0\} \\
        \Pre(f) : = \displaystyle \bigcup_{m=1}^\infty \bigcup_{l=0}^{m-1} \Pre_{m,l} (f)
        \end{array}
        \]
    \end{df}

    The commutativity is a very strong condition. Without height functions on $V$, we can find information of dynamical systems of commutable maps.
    \begin{prop}\label{cm}
        Let $f,g:V \rightarrow V$ be maps on a set $V$ such that $ f \circ g = g \circ f$. Then the followings are true.
        \begin{enumerate}
            \item $g\bigl ( \ox_f(P) \bigr) = \ox_f \bigl( g(P) \bigr)$
            \item $g\bigl( \Pre_{m,l}(f) \bigr) \subset \Pre_{m,l}(f)$, $g\bigl( \Pre(f) \bigr) \subset \Pre(f).$
            \item If $g$ is a finite-to-one map on an infinite set $V$ or $g$ is surjective map on a finite set $V$, then $g\bigl( \Pre(f) \bigr) = \Pre(f).$
        \end{enumerate}
    \end{prop}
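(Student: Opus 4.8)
My plan is to reduce all three parts to the single identity $f^m\circ g = g\circ f^m$ for every $m\ge 0$, which follows from $f\circ g = g\circ f$ by a one-line induction on $m$ (base case trivial; inductive step $f^{m+1}g = f(gf^m) = (fg)f^m = (gf)f^m = gf^{m+1}$). For (1) I would just evaluate at $P$: since $g(f^m(P)) = f^m(g(P))$ for all $m$, we get $g\bigl(\ox_f(P)\bigr) = \{g(f^m(P)):m\ge 0\} = \{f^m(g(P)):m\ge 0\} = \ox_f\bigl(g(P)\bigr)$. For (2), if $f^m(P)=f^l(P)$ then $f^m(g(P)) = g(f^m(P)) = g(f^l(P)) = f^l(g(P))$, so $g$ carries $\Pre_{m,l}(f)$ into itself; taking the union over all pairs $m>l\ge 0$ gives $g\bigl(\Pre(f)\bigr)\subset\Pre(f)$. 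These two parts use nothing beyond commutativity.

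The substance of (3) is the reverse inclusion $\Pre(f)\subset g\bigl(\Pre(f)\bigr)$. When $V$ is finite, every forward orbit is finite, hence eventually periodic, so $\Pre(f)=V$ and the claim reduces to $g$ being onto. When $V$ is infinite and $g$ is finite-to-one, I would fix $P\in\Pre(f)$ and pull its (finite) forward orbit back along $g$, setting $T:=g^{-1}\bigl(\ox_f(P)\bigr)$. Then $T$ is finite, because $g$ has finite fibers and $\ox_f(P)$ is finite; $T$ is $f$-invariant, because for $Q\in T$ we have $g(f(Q))=f(g(Q))\in f\bigl(\ox_f(P)\bigr)\subset\ox_f(P)$, so $f(Q)\in T$; and therefore $T$, being finite and $f$-invariant, consists entirely of preperiodic points, i.e. $T\subset\Pre(f)$. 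Since $g$ is onto, $g(T)=\ox_f(P)\ni P$, so $P\in g\bigl(\Pre(f)\bigr)$, which completes the proof.

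The one genuinely substantive step is this pullback in (3): the finite-to-one hypothesis is exactly what keeps the $g$-preimage of a finite forward orbit finite, after which $f$-invariance is automatic and forces that preimage into $\Pre(f)$, while surjectivity of $g$ (which holds in all the intended applications, where $g$ is an endomorphism of $\pp^n$ or a dominant polynomial map) supplies an actual preperiodic preimage of $P$. I expect no serious difficulty here; the only points requiring care are that "finite-to-one" must be used together with $g$ being onto, and that a finite forward-invariant set under any self-map automatically lies in the preperiodic locus.
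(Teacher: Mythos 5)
Your arguments for (1), (2), and the finite case of (3) coincide with the paper's. For the infinite case of (3) the core finiteness idea is the same but the framing differs: the paper argues by contradiction that if $g(P)\in\Pre(f)$ with $P\notin\Pre(f)$ then, by part (1), $g$ would restrict to a map from the infinite orbit $\ox_f(P)$ onto the finite orbit $\ox_f\bigl(g(P)\bigr)$, contradicting finite-to-one; you instead pull back the finite orbit to $T:=g^{-1}\bigl(\ox_f(P)\bigr)$, show $T$ is finite and $f$-invariant, conclude $T\subset\Pre(f)$, and then use surjectivity to land $P$ in $g(\Pre(f))$. Both are correct and essentially equivalent in content; yours is a touch more constructive in that it exhibits the preimage set explicitly rather than deriving a contradiction.

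More importantly, your caveat about surjectivity identifies a genuine imprecision in the paper. As stated, the infinite case assumes only that $g$ is finite-to-one, and the proof establishes only the implication $g(P)\in\Pre(f)\Rightarrow P\in\Pre(f)$, i.e.\ $g^{-1}(\Pre(f))\subset\Pre(f)$. This does not yield $\Pre(f)\subset g(\Pre(f))$ unless every point of $\Pre(f)$ actually has a $g$-preimage. For example, with $V=\mathbb{Z}_{\geq 0}$, $f=\mathrm{id}$, $g(n)=n+1$, the maps commute and $g$ is finite-to-one, yet $\Pre(f)=V$ while $g(\Pre(f))=\mathbb{Z}_{\geq 1}\neq V$. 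Surjectivity of $g$ (automatic for the endomorphisms and dominant polynomial maps the paper has in mind, but not implied by finite-to-one alone) is needed, and your proposal correctly incorporates it where the paper's statement and proof do not.
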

    \begin{proof}
        \begin{enumerate}
            \item $g\bigl ( \ox_f(P) \bigr) = \{ g\bigl( f^m(P) \bigr) \} =  \{ f^m \bigl( g (P) \bigr) \}= \ox_f \bigl( g(P) \bigr)$
            \item If $f\in \Pre_{m,l}(P)$, then $f^l(P) = f^{m}(P)$. So, we get
            \[
             f^l\bigl( g(P) \bigr) =  g\bigl(f^l(P)\bigr)= g\bigl( f^{m}(P) \bigr) = f^{m}\bigl(g(P)\bigr)
            \]
            and hence $g(P) \in \Pre_{m,l}(P)$. Moreover,
            \[
            g\bigl( \Pre(f) \bigr) \subset \bigcup_{m=1}^\infty \bigcup_{l=0}^{m-1} g\bigl( \Pre_{m,l}(f) \bigr)
             \subset \bigcup_{m=1}^\infty \bigcup_{l=0}^{m-1} \Pre_{m,l}(f) = \Pre(f)
            \]
            \item If $V$ is a finite set, then $\Pre(f) = V$ for any map $f$. Therefore, $g\bigl( \Pre(f) \bigr) = \Pre(f)$ means
            $g(V) = V$.

            Let $V$ be an infinite set and let $g$ be a finite-to-one self map on $V$. Suppose that $g(P) \in \Pre(f)$ and $P \not\in \Pre(f)$. Then, $g$ is a map from infinite set to finite set:
            \[
            g : O_f(P) \rightarrow O_f\bigl ( g(P) \bigr),
            \]
            which is a contradiction.
        \end{enumerate}
    \end{proof}

    It is clear that more information of $V$ we have, more information of $(V,f)$ we can get. If we have arithmetic information like the finiteness of $K$-rational preperiodic points, we have the following result.
    \begin{prop}\label{cm2}
        Let $K$ be a finitely generated field over $\mathbb{Q}$, let $V$ be a subset of a projective variety defined over $K$ and let $f,g:V \rightarrow V$ be maps on a set $V$ such that $f \circ g = g \circ f$. Suppose that $\Pre(f)\cap V(K')$ is finite for all finite extension $K'$ of $K$. Then, $\Pre(f) \subset \Pre(g).$
    \end{prop}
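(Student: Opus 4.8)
The plan is to fix $P \in \Pre(f)$ and prove directly that its forward $g$-orbit $\ox_g(P) = \{\, g^n(P) : n \geq 0 \,\}$ is a finite set. Once this is known, the pigeonhole principle produces integers $m > l \geq 0$ with $g^m(P) = g^l(P)$, which is exactly the statement $P \in \Pre_{m,l}(g) \subseteq \Pre(g)$; since $P$ was arbitrary this yields $\Pre(f) \subseteq \Pre(g)$.

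To see that $\ox_g(P)$ is finite, I would first apply Proposition~\ref{cm}(2): from $f \circ g = g \circ f$ we get $g\bigl(\Pre(f)\bigr) \subseteq \Pre(f)$, and iterating this inclusion shows $\ox_g(P) \subseteq \Pre(f)$. It therefore suffices to confine the orbit to $V(K')$ for a single finite extension $K'/K$, since $\Pre(f) \cap V(K')$ is finite by hypothesis. For this I would take $K'$ to be a finite extension of $K$ over which the point $P$ is defined (in the situations of Theorems~A and~B the set $\Pre(f)$ consists of algebraic points, so such a $K'$ exists) and over which the maps $f$ and $g$ are defined; then every iterate $g^n(P)$ is again a $K'$-point, so $\ox_g(P) \subseteq \Pre(f) \cap V(K')$, a finite set, and the argument is complete.

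I do not expect a serious obstacle here: the whole argument rests on the orbit-stability assertion of Proposition~\ref{cm}(2) together with the Northcott-type finiteness hypothesis. The one point requiring care is the bookkeeping of fields of definition — one must choose $K'$ once and for all so that it simultaneously contains a field of definition of $P$ and of the maps $f,g$, and then observe that applying $g$ never enlarges it, which is precisely what keeps the entire orbit inside the finite set $\Pre(f) \cap V(K')$.
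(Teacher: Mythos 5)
Your argument is the same as the paper's: both apply Proposition~\ref{cm}(2) to confine the $g$-orbit of $P$ inside $\Pre(f) \cap V(K')$ for a single finite extension $K'/K$ over which $P$ and the maps are defined, then invoke the finiteness hypothesis and pigeonhole. Your parenthetical worry about a field of definition for $P$ is fair --- the paper's proof likewise silently assumes $P$ is algebraic over $K$ (and writes ``coefficients of $f$'' where ``of $g$'' is what is actually needed to keep the orbit in $V(K')$) --- but this causes no difficulty in the intended applications.
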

    \begin{proof}
            Pick a point $P \in \Pre(f)$. Then, $g^k(P) \in \Pre(f)$ because of Proposition~\ref{cm},$(2)$.
             Suppose that $K'$ is a finite extension of $K$, containing all coordinate of $P$ and all coefficient of $f$.
             Then, $g^k(P) \in \Pre(f) \cap V(K')$. Therefore, $\ox_g(P) \subset \Pre(f) \cap V(K')$ and hence $\ox_g(P)$ is a finite set.
    \end{proof}

    The condition of Proposition~\ref{cm2} is a common one in arithmetic dynamics. Let $h=h^{\overline{B}}_L$ be an arithmetic height function on $\overline{K}$, associated with a nef and big polarization of $B$ and an ample line bundle $L$ on $\pp^n$. (See \cite{MO} for details.) If $\phi$ is an endomorphism or regular polynomial automorphism of degree $d>1$ then $\Pre(\phi)$ is of bounded height and hence $\Pre(\phi)$ satisfies the condition of Proposition~\ref{cm2}.

    \begin{ex}
        Let $f(x,y) = (x^2,y^2)$ and $g(x,y) = (x,xy)$. Then, they are commutable with each other.
        Moreover, $f$ is finite-to-one map and $\Pre(f)$ is a set of bounded height. Thus, by Proposition~\ref{cm} and \ref{cm2}, we know that
        \[
        g \Pre(f) \subset \Pre(f), \quad \Pre(f) \subset \Pre(g).
        \]
        On the other hand, we know that
        \[
        g(\af^n) \cap \left( \{0\} \times \af^1 \right) = \{(0,0)\}
        \]
        and hence $(0, 1) \in \Pre(f) \setminus g\bigl( \Pre(f) \bigr)$. Also, $(0,2) \in \Pre(g) \setminus \Pre(f)$.  Therefore,
        \[
        g \Pre(f) \varsubsetneq \Pre(f), \quad \Pre(f) \varsubsetneq \Pre(g).
        \]
    \end{ex}
    \begin{ex}
        Let $V$ be a projective variety over a number field and let $f,g:  V \rightarrow V$ be endomorphisms commutable with each other.
        Assume that $f,g$ are polarized by the same ample divisor $D$: $f^*D \sim q_fL, g^*D \sim q_gL$ for some $q_f, q_g >1$.
        Then, they satisfy all condition of Proposition~\ref{cm2}. So, $\Pre(f) = \Pre(g)$ and hence they have the same canonical height:
        $\widehat{h}_f = \widehat{h}_g$ by \cite[Theorem 3.1.2]{YZ}. With the same reason, $\Pre(f^m\circ g^l) = \Pre(f)$ and $\widehat{h}_{f^m\circ g^l} = \widehat{h}_{f}$ for all $(m,l) \neq (0,0)$.
    \end{ex}

    \begin{thm}
        Let $K$ be a finitely generated field over $\mathbb{Q}$, let $f,g: \af^n \rightarrow \af^n$ be polynomial maps commutable with each other and let $r(f), r(g)$ be the $D$-ratio of $f$ and $g$ defined on \cite{Le2}. Suppose that $r(f) < \deg f $ and $r(g) < \deg (g)$. Then,
    \[
    \Pre(f)= \Pre(g) = \Pre(f^m \circ g^l)
    \]
    for all $(m,l) \neq (0,0)$.
    \end{thm}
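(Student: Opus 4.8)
The plan is to reduce the entire statement to Proposition~\ref{cm2}, whose hypothesis --- finiteness of $\Pre(\,\cdot\,)\cap\af^n(K')$ for every finite extension $K'/K$ --- follows from Northcott's theorem as soon as the relevant preperiodic set has bounded height, and bounded height is exactly what a small $D$-ratio provides via \cite{Le2} (equivalently Theorem~\ref{Zdense}).

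\emph{The equality $\Pre(f)=\Pre(g)$.} The height input I would use from \cite{Le2} is that $r(f)<\deg f$ yields a lower bound of the form $h_{\pp^n}(f(P))\ge c_f\,h_{\pp^n}(P)-C_f$ for all $P\in\af^n(\overline{K})$, with $c_f=\deg f/r(f)>1$ (this is precisely what forces $\Pre(f)$ to be of bounded height in Theorem~\ref{Zdense}), and likewise $h_{\pp^n}(g(P))\ge c_g\,h_{\pp^n}(P)-C_g$ with $c_g>1$. Hence $\Pre(f)\cap\af^n(K')$ and $\Pre(g)\cap\af^n(K')$ are finite for every finite extension $K'/K$. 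Since $f\circ g=g\circ f$, Proposition~\ref{cm2} applies to the pair in either order and gives $\Pre(f)\subseteq\Pre(g)$ and $\Pre(g)\subseteq\Pre(f)$, so $\Pre(f)=\Pre(g)$.

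\emph{The composite $h:=f^m\circ g^l$.} Fix $(m,l)\neq(0,0)$ with $m,l\ge0$ (only these make sense, as $f,g$ need not be invertible). Since $f$ and $g$ commute, $h^k=f^{mk}\circ g^{lk}$, hence $\ox_h(P)\subseteq\{\,f^ig^j(P):i,j\ge0\,\}$ for every $P$. For $\Pre(f)\subseteq\Pre(h)$: take $P\in\Pre(f)=\Pre(g)$ and let $K'$ be a finite extension of $K$ containing the coordinates of $P$ and the coefficients of $f$ and $g$; by Proposition~\ref{cm}(2) each $g^j(P)$ lies in $\Pre(f)$, hence so does each $f^ig^j(P)$, and all of these points are in $\af^n(K')$, so $\{\,f^ig^j(P)\,\}\subseteq\Pre(f)\cap\af^n(K')$ is finite; therefore $\ox_h(P)$ is finite and $P\in\Pre(h)$. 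For $\Pre(h)\subseteq\Pre(f)$: one checks directly that $f\circ h=h\circ f$ (expand both sides using $f\circ g=g\circ f$), and --- see the next paragraph --- $\Pre(h)$ is again of bounded height, so $\Pre(h)\cap\af^n(K')$ is finite for every $K'$; Proposition~\ref{cm2} applied to the commuting pair $(h,f)$ then gives $\Pre(h)\subseteq\Pre(f)$. Combining the two inclusions, $\Pre(f^m\circ g^l)=\Pre(f)$, as claimed.

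\emph{The main obstacle} is the boundedness of height of $\Pre(f^m\circ g^l)$. I see two routes. If \cite{Le2} provides a composition estimate for the $D$-ratio (something of the shape $r(\varphi_1\circ\varphi_2)\le r(\varphi_1)\,r(\varphi_2)$), then since $\deg h=(\deg f)^m(\deg g)^l$ one gets $r(h)<\deg h$ and Theorem~\ref{Zdense} applies to $h$ verbatim. Failing that, one iterates the basic height inequality: from the two one-step bounds above one obtains $h_{\pp^n}(h(P))\ge\lambda\,h_{\pp^n}(P)-C'$ for a fixed constant $C'$ and $\lambda=c_f^{\,m}c_g^{\,l}>1$ (note $\lambda>1$ for every $(m,l)\neq(0,0)$ since $c_f,c_g>1$), and iterating this forces $h_{\pp^n}(P)\le C'/(\lambda-1)$ whenever $P\in\Pre(h)$. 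The point to check carefully in \cite{Le2} is the exact shape of the basic inequality --- in particular that the expansion constant is genuinely $>1$ under the strict hypotheses $r(f)<\deg f$ and $r(g)<\deg g$, and that the error terms are uniform enough to survive iteration; the remainder of the argument is routine bookkeeping with Propositions~\ref{cm}, \ref{cm2} and Northcott's theorem.
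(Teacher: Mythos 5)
Your proposal is correct and follows essentially the same route as the paper: bounded height of $\Pre(f)$ and $\Pre(g)$ from $r<\deg$ via \cite{Le2}, Proposition~\ref{cm2} in both directions for $\Pre(f)=\Pre(g)$ and for $\Pre(f^m\circ g^l)\subset\Pre(f)$, and a composition estimate for the $D$-ratio to get bounded height of $\Pre(f^m\circ g^l)$. The estimate you hoped for does exist in \cite{Le2} (Proposition~4.5), stated in the normalized form $r(f\circ g)/\deg(f\circ g)\le (r(f)/\deg f)\cdot(r(g)/\deg g)$, which gives $r(f^m\circ g^l)<\deg(f^m\circ g^l)$ directly and so sidesteps your assumption that $\deg(f^m\circ g^l)=(\deg f)^m(\deg g)^l$.
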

    \begin{proof}
        \cite[Theorem 5.3]{Le2} says that $\Pre (f)$ is a set of bounded height if $r(f) < \deg(f)$. Thus,
    $f,g$ satisfy the condition of Proposition~\ref{cm}~(3). Thus, $\Pre(f)=\Pre(g)$.

    For the second equality,  we know that $f^m \circ g^l$ also commute with $f$ and $g$ and hence $\Pre(f) \subset \Pre(f^m\circ g^l)$. Furthermore,  \cite[Proposition 4.5]{Le2} says that
    \[
    \dfrac{r(f\circ g)}{\deg f \circ g} \leq \dfrac{r(f)}{\deg f} \cdot \dfrac{r(g) }{\deg g}.
    \]
    By induction, we have
    \[
    \dfrac{r(f^m \circ g^l) }{\deg f^m \circ g^l} \leq \left(\dfrac{r(f)}{\deg f}\right)^m \cdot \left( \dfrac{r(g) }{\deg g} \right)^l.
    \]
    Thus, the assumption $\dfrac{r(f)}{\deg f}, \dfrac{r(g) }{\deg g}<1$ derives $r(f^m \circ g^l) < \deg (f^k \circ g^l) $. Therefore, $\Pre(f^m \circ g^l) $ is of bounded height and hence $\Pre(f^k\circ g^l)\subset  \Pre(f)$.
    \end{proof}

    In Proposition~\ref{cm}, we showed that $g$ becomes a self map on $\Per_{m,l}(f)$. However, $g$ does not preserve the period. Here's a clue to check the change of the period by observing the multiplier of a periodic point.

    \begin{df}
        Let $f$ be an endomorphism on $\pp^1(\cc)$ and $P \in \Per(f)$ be a point with period $l$. We define \emph{the multiplier of $P$ by $f$} to be
        \[
        \lambda_f(P) := (f^l(P))' = \prod_{i=1}^l f'\bigl( f^i(P) \bigr)
        \]
        where $l$ is the period of $P$.
    \end{df}

    we generalize the multiplier in higher dimension. Though it cannot carry much information as in dimension1, it is enough for my purpose.

    \begin{df}
        Let $f$ be an endomorphism on $\pp^n(\cc)$ and let $P \in \Per(f)$ be a point with period $l$. We define \emph{the multiplier of $P$ by $f$} to be
        \[
        \lambda_f(P) :=
        \left| \dfrac{df^l}{dX}(P) \right|= \prod_{i=1}^l \left| \dfrac{df}{dX}\bigl( f^i(P) \bigr) \right|
        \]
        where $l$ is the period of $P$ and $\dfrac{df}{dX} = \left( \dfrac{\partial f_i}{\partial x_j}\right)$ is the Jacobian matrix of $f$.
    \end{df}

    \begin{rem}
        The multiplier is $\PGL_{n+1}$-invariant; suppose that $P\in \Per(f)$ of period $l$ and $\sigma \in \PGL_{n+1}$. Then $Q=\sigma(P)$ is a periodic point of $f^{\sigma} = \sigma \circ f \circ \sigma^{-1}$ of period $l$. Then,
        \begin{eqnarray*}
        \lambda_{f^{\sigma}} (Q) &= &\left| \dfrac{d(\sigma \circ f \circ \sigma^{-1})^l}{dX}(Q) \right| \\
        &= &\left| \dfrac{d(\sigma \circ f^l \circ \sigma^{-1})}{dX}(Q) \right| \\
         &= &\left| \dfrac{d\sigma}{dX} \bigl( f^l \circ \sigma^{-1}(Q) \bigr)
                    \cdot \dfrac{d f^l }{dX}\sigma^{-1}(Q)
                    \cdot \dfrac{d \sigma^{-1}}{dX}(Q)\right| \\
         &= &\left| \dfrac{d\sigma}{dX}(P)
                    \cdot \dfrac{d \sigma^{-1}}{dX}(Q)
                    \cdot \dfrac{d f^l }{dX}(P)\right| \\
         &= &\left| \dfrac{d f^l }{dX}(P)\right| \\
          &= & \lambda_f (P).
        \end{eqnarray*}
    \end{rem}

    \begin{df}
        Let $f :\cc^n \rightarrow \cc^n$ be a polynomial map and $P \in \Per(f)$. We say that \emph{$P$ is a critical point of $f$} if $\left| \dfrac{df}{dX}(P)\right|=0 $.
    \end{df}

    \begin{prop}\label{multi}
        Let $f,g :\cc^n \rightarrow \cc^n$ be polynomial maps commutable with each other and let $P \in \Per(f)$. Suppose that $P$ is not critical point of $g$. Then
        \[
        \lambda_f(P) = (\lambda_f\bigl(g(P)\bigr)^{l_0}
        \]
        where $l_0 = \dfrac{f\text{-period of }P}{f\text{-period of }g(P)}$.
    \end{prop}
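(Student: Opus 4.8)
The plan is to differentiate the commutation relation $f^l\circ g = g\circ f^l$ at $P$ (where $l$ is the $f$-period of $P$) and then pass to determinants. First I would record the elementary period bookkeeping: $f^l\bigl(g(P)\bigr) = g\bigl(f^l(P)\bigr) = g(P)$, so $g(P)$ is again $f$-periodic and its $f$-period $l'$ divides $l$; hence $l_0 = l/l'$ is a positive integer, and the statement at least makes sense. (This is of course consistent with $g\bigl(\Pre(f)\bigr)\subset\Pre(f)$ from Proposition~\ref{cm}.)

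Next, apply the chain rule to both sides of $f^l\circ g = g\circ f^l$ at $P$. Using $f^l(P) = P$, the left-hand side gives $\dfrac{df^l}{dX}\bigl(g(P)\bigr)\cdot\dfrac{dg}{dX}(P)$ and the right-hand side gives $\dfrac{dg}{dX}\bigl(f^l(P)\bigr)\cdot\dfrac{df^l}{dX}(P) = \dfrac{dg}{dX}(P)\cdot\dfrac{df^l}{dX}(P)$. These two matrix products are therefore equal; taking the absolute value of the determinant of each and using multiplicativity of $\det$ (which needs no commutativity of the factors), I obtain
\[
\left|\dfrac{df^l}{dX}\bigl(g(P)\bigr)\right|\cdot\left|\dfrac{dg}{dX}(P)\right|
= \left|\dfrac{dg}{dX}(P)\right|\cdot\left|\dfrac{df^l}{dX}(P)\right|
= \left|\dfrac{dg}{dX}(P)\right|\cdot\lambda_f(P).
\]

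It then remains to identify $\left|\dfrac{df^l}{dX}\bigl(g(P)\bigr)\right|$ with $\lambda_f\bigl(g(P)\bigr)^{l_0}$. Writing $f^l = (f^{l'})^{l_0}$ and using that $g(P)$ is a fixed point of $f^{l'}$, the product rule for the multiplier (exactly the displayed identity in the definition of $\lambda_f$) collapses the $l_0$ Jacobian factors, all evaluated at the same point $g(P)$, to a single value:
\[
\left|\dfrac{df^l}{dX}\bigl(g(P)\bigr)\right| = \left|\dfrac{df^{l'}}{dX}\bigl(g(P)\bigr)\right|^{l_0} = \lambda_f\bigl(g(P)\bigr)^{l_0}.
\]
Finally, since $P$ is not a critical point of $g$, we have $\left|\dfrac{dg}{dX}(P)\right|\neq 0$, so this factor may be cancelled from the displayed equality, giving $\lambda_f(P) = \lambda_f\bigl(g(P)\bigr)^{l_0}$.

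The computation itself is short; the only points needing care are the period bookkeeping (making sure $l_0\in\mathbb{Z}_{>0}$ and that $\lambda_f\bigl(g(P)\bigr)$ is read off with respect to the genuine period $l'$ of $g(P)$, not with respect to $l$) and the observation that the non-criticality hypothesis on $g$ at $P$ is precisely what licenses dividing out $\left|\dfrac{dg}{dX}(P)\right|$. I do not expect a genuine obstacle beyond these routine verifications.
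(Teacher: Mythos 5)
Your argument is correct and is essentially the paper's own: differentiate the commutation relation $f^l\circ g = g\circ f^l$, evaluate at $P$ using $f^l(P)=P$, pass to determinants, and cancel the nonzero factor $\bigl|\tfrac{dg}{dX}(P)\bigr|$. You are in fact somewhat more careful than the paper, which omits the step of writing $f^l=(f^{l'})^{l_0}$ to identify $\bigl|\tfrac{d(f^l)}{dX}(g(P))\bigr|$ with $\lambda_f\bigl(g(P)\bigr)^{l_0}$ and states its conclusion with a slightly garbled exponent.
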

    \begin{proof}
    From the assumption, we have $g \circ f^l = f^l \circ g$ for all $l\geq 0$. By the Chain rule, we get
    \[
    {\dfrac{dg}{dX}} \bigl( f^l(X) \bigr) \cdot {\dfrac{d(f^l)}{dX}}(X) = {\dfrac{d(f^l)}{dX}}\bigl( g(X) \bigr) {\dfrac{dg}{dX}}(X).
    \]
    Suppose that $P$ is a periodic points of $f$ with period $l$. Then,
    \[
    {\dfrac{dg}{dX}}(P) {\dfrac{d(f^l)}{dX}}(P)
    = {\dfrac{dg}{dX}}(f^l(P)) {\dfrac{d(f^l)}{dX}}(P)
    = {\dfrac{d(f^l)}{dX}}(g(P)) {\dfrac{dg}{dX}}(P).
    \]
    Thus, if $P$ is not critical point of $g$, we have
    \[
    \lambda_f\bigl( g(P) \bigr) = (\lambda_f(P))^{1/l'}.
    \]
    \end{proof}

    \begin{thm}
    Let $f,g : \cc^n \rightarrow \cc^n$ be polynomial maps such that $f\circ g = g \circ f$, let $P$ be a $f$-periodic point of period $l$ and let $K = \qq(f,g,P)$.
    If $\lambda_f(P)$ does not have $l_0$-th root in $K$ for all divisor $l_0$ of $l$, then $g$ preserves the $f$-period of $P$.
    \end{thm}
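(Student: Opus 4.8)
The plan is to argue by contradiction. Suppose $g$ does not preserve the $f$-period of $P$: write $l$ for the $f$-period of $P$ and $l'$ for the $f$-period of $Q:=g(P)$, so $l'\neq l$. First I record that $l'$ must divide $l$: by Proposition~\ref{cm}(2) the point $Q$ lies in $\Pre(f)$, and applying $g$ to $f^{l}(P)=P$ together with $f\circ g=g\circ f$ gives $f^{l}(Q)=(f^{l}\circ g)(P)=(g\circ f^{l})(P)=g(P)=Q$, so $Q$ is $f$-periodic with period $l'\mid l$. Since $l'\neq l$, the integer $l_0:=l/l'$ is a divisor of $l$ with $l_0>1$. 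Next I observe that one may assume $\lambda_f(P)\neq 0$: otherwise $0$ would be an $l_0$-th root of $\lambda_f(P)$ lying in $K$, contrary to hypothesis. In particular $\dfrac{df}{dX}$ is nonsingular at every point of the cycle $\ox_f(P)$, and $\dfrac{d(f^{l})}{dX}(P)$ is invertible.

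The heart of the argument is then Proposition~\ref{multi}. Assume for the moment that $P$ is not a critical point of $g$. Applying Proposition~\ref{multi} (with this $l_0=\tfrac{f\text{-period of }P}{f\text{-period of }g(P)}$) yields
\[
\lambda_f(P)=\lambda_f\bigl(g(P)\bigr)^{\,l_0}.
\]
Now $Q=g(P)$ has coordinates in $K=\qq(f,g,P)$ and $f$ has coefficients in $K$, so the Jacobian $\dfrac{d(f^{l'})}{dX}(Q)$ has entries in $K$, and hence $\lambda_f(Q)\in K$. Thus $\lambda_f(Q)$ is an $l_0$-th root of $\lambda_f(P)$ inside $K$, contradicting the hypothesis. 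This settles the case in which $P$ is not a critical point of $g$.

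It remains to dispose of the case in which $P$ is a critical point of $g$, and this is the step I expect to be the main obstacle, since Proposition~\ref{multi} is silent there. The approach is to exploit the chain-rule identity $\dfrac{dg}{dX}\bigl(f(x)\bigr)\dfrac{df}{dX}(x)=\dfrac{df}{dX}\bigl(g(x)\bigr)\dfrac{dg}{dX}(x)$: since (by the first paragraph) $f$ is nonsingular along $\ox_f(P)$, this identity forces the whole orbit $\ox_f(P)$ into the critical locus $\{|\tfrac{dg}{dX}|=0\}$ of $g$; comparing with the corresponding identity arising from $f\circ g=g\circ f$ one then aims to show — at least when $g$ is nonconstant, i.e. $\deg g\geq 1$, which is the situation relevant to Theorem~B — that this critical locus is carried into the critical locus of $f$, so that $\ox_f(P)$ meets $\{|\tfrac{df}{dX}|=0\}$ and hence $\lambda_f(P)=0$, contradicting the reduction above. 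Making this comparison of critical loci precise, and identifying exactly which nondegeneracy hypothesis on $g$ it requires, is the delicate point; the rest is the routine multiplier bookkeeping carried out in the first two paragraphs.
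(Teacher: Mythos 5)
Your argument for the case where $P$ is not a critical point of $g$ is exactly the paper's: the paper observes that $\lambda_f(P)$ and $\lambda_f\bigl(g(P)\bigr)$ both lie in $K=\qq(f,g,P)$ and then cites Proposition~\ref{multi} to get $\lambda_f(P)=\lambda_f\bigl(g(P)\bigr)^{l_0}$ when the period drops, so that $\lambda_f\bigl(g(P)\bigr)$ is a forbidden $l_0$-th root. Your preliminary reductions (that $l'\mid l$, that $l_0>1$, and that the hypothesis forces $\lambda_f(P)\neq 0$) are all correct and are made more explicit here than in the paper.

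You are also right that the critical-point case is a genuine issue: Proposition~\ref{multi} carries the standing assumption that $P$ is not $g$-critical, and the paper's proof of this theorem invokes that proposition without verifying or even mentioning the hypothesis, so the gap you see is present in the paper as well. Your proposed repair, however, does not close it. From the chain-rule identity together with $\lambda_f(P)\neq 0$ you can indeed propagate $g$-criticality along the cycle, so all of $\ox_f(P)$ lies in the critical locus of $g$. But the identity
\[
\left|\dfrac{dg}{dX}\bigl(f(x)\bigr)\right|\cdot\left|\dfrac{df}{dX}(x)\right|
=\left|\dfrac{df}{dX}\bigl(g(x)\bigr)\right|\cdot\left|\dfrac{dg}{dX}(x)\right|
\]
reduces to $0=0$ at a $g$-critical point and imposes no constraint on $\dfrac{df}{dX}$; nothing in $f\circ g=g\circ f$ forces the critical locus of $g$ into the critical locus of $f$, so the desired conclusion $\lambda_f(P)=0$ does not follow from this comparison. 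The critical case remains open in your write-up, just as it does (silently) in the paper's.
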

    \begin{proof}
    By definition of multiplier, $L$ should contain both $\lambda_f(P), \lambda_f\bigl(g(P)\bigr)$.
    If $g$ does not preserve the $f$-period of $P$, then
    \[
    \lambda_f(P)= \lambda_f(g(P))^{l_0}
    \]
    by Proposition~\ref{multi}.
    \end{proof}

    \begin{ex}[Finding rational periodic points of the exact period]
    Let $f(x,y) = (y^2, x^2)$, $g(x,y) = (x^2y, xy^2)$. Then $f,g$ are commutable with each other. Let $\zeta = \zeta_7$. Then $P = (\zeta, \zeta^2)$ is a periodic point of $f$ of period $3$, of multiplier $\lambda_f(P) = -4^3 \zeta^6$. Since $-1$ is not cube in $\mathbb{Q}(\zeta)$ and $P$ is not critical points of $g$, we can easily claim that all points in
    \[
    \ox_g(P) =\{(\zeta^4, \zeta^5), (\zeta^{-1}, 1), (\zeta^{-2}, \zeta^{-1})\}
    \]
     consists of periodic points of period 6. Similarly, use $g_2(x,y) = (xy^2, x^2y)$ to get other points of period $6$:
    \[
    (\zeta^5, \zeta^4), (\zeta^{-1}, 1), (\zeta^{-1}, \zeta^{-2}) \in \ox_{g_2}(P).
    \]
    \end{ex}

\section{The set of endomorphisms commutable with an endomorphism $\phi$}

    In this section, we prove Theorem~M for endomorphism case using the space of rational maps and the space of endomorphisms on $\pp^n$: we consider $\Rat_d^n$, the set of rational maps of degree $d$ on $\pp^n$. Then, it is a subset of $\pp^N$ for some $N$.
    \[
    \iota : \Rat_d^n \to \pp^N  \quad \phi:=[\phi_0, \cdots, \phi_n] \mapsto [a_{iJ}]_{i=0, \cdots, n, J\in \mathcal{J}}
    \]
    where $\mathcal{J} = \{J = \langle j_0, \cdots, j_n  \rangle ~|~ \sum j_i = d \}$ and $f_i = \sum_{J\in \mathcal{J}} a_{iJ} X^J$ and $X^J = X_0^{j_0} \cdots X_n^{j_n}$. We also consider $\Mor_d^n$, the set of endomorphisms of degree $d$ on $\pp^n$, in $\pp^N$. Then, $\Mor_d^n$ is a Zariski open subset of $\pp^N$ because it is the inverse image of $\cc^*$ by the Resultant map. (See \cite{LV} or \cite[\S4.2]{S2}.)

    Now, choose an endomorphism $\phi$ and consider $\Com(\phi,d)$, a subset of $\Mor_d^n$:
    \[
    \Com(\phi,d) \subset \Mor_d^n \subset \pp^{N}, \quad [a_{0I} X^I, \cdots, a_{nI} X^I] \mapsto [a_{0I}, \cdots, a_{nI}]_{I\in \mathcal{I}}
    \]
    $\mathcal{I} = \{(I_1, \cdots, I_n) ~|~ \sum_{t=1}^n I_t = d\}.$ Next lemma says that $\Com(\phi,d)$ is a Zariski closed subset of $\Rat_d^n$ so that $\Com(\phi,d)$ has finitely many irreducible component.

    \begin{lem}\label{closed}
    Let $\phi: \pp^n \rightarrow \pp^n$ be an endomorphism. Then $\Com(\phi,d)$ is a Zariski closed subset of $\Mor_d^n$.
    \end{lem}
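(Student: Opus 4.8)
The plan is to exhibit $\Com(\phi,d)$ as the zero locus of an explicit system of polynomial equations in the coordinates $[a_{iJ}]$ of $\pp^N$, intersected with the open set $\Mor_d^n$. The key point is that for $\psi\in\Mor_d^n$ with homogeneous coordinates $[a_{iJ}]$, both compositions $\phi\circ\psi$ and $\psi\circ\phi$ are tuples of homogeneous polynomials of degree $d\cdot\deg\phi$ in $X_0,\dots,X_n$, and their coefficients are themselves polynomials in the $a_{iJ}$ — in fact bihomogeneous, of degree $\deg\phi$ in the $a_{iJ}$ coming from $\phi\circ\psi$ and degree $1$ (times fixed data of $\phi$) from $\psi\circ\phi$, so after clearing we may take each defining equation to be homogeneous in $[a_{iJ}]$, which is what is needed for a well-defined closed subset of $\pp^N$.

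First I would fix notation: write $\phi=[\phi_0,\dots,\phi_n]$ with $\phi_i$ homogeneous of degree $e:=\deg\phi$, and for a point of $\Mor_d^n$ write the corresponding map as $\psi=[\psi_0,\dots,\psi_n]$ with $\psi_i=\sum_{J\in\mathcal J}a_{iJ}X^J$. Then $(\phi\circ\psi)_k=\phi_k(\psi_0,\dots,\psi_n)$ and $(\psi\circ\phi)_k=\psi_k(\phi_0,\dots,\phi_n)=\sum_J a_{kJ}\,\phi_0^{j_0}\cdots\phi_n^{j_n}$. Expanding each of these in the monomial basis $X^L$ with $|L|=de$, I get coefficients $A_{kL}(a)$ and $B_{kL}(a)$ respectively, which are polynomials in the $a_{iJ}$. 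The relation $\phi\circ\psi=\psi\circ\phi$ in $\Rat_d^n$ — i.e. as points of the appropriate projective space of tuples — is the condition that the two coefficient tuples are proportional, which is expressed by the vanishing of all $2\times 2$ minors
\[
A_{kL}(a)\,B_{k'L'}(a)-A_{k'L'}(a)\,B_{kL}(a)=0
\]
over all indices $(k,L),(k',L')$. Each such minor is homogeneous in the $a_{iJ}$ (of degree $e+1$), so the common zero locus $Z\subset\pp^N$ of all these minors is a well-defined Zariski closed set, and $\Com(\phi,d)=Z\cap\Mor_d^n$ is closed in $\Mor_d^n$.

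I would then just need to check that this scheme-theoretic description matches the set-theoretic definition of $\Com(\phi,d)$: a point of $\Mor_d^n$ lies in $Z$ iff the tuples $(A_{kL})$ and $(B_{kL})$ are linearly dependent, and since for $\psi\in\Mor_d^n$ both $\phi\circ\psi$ and $\psi\circ\phi$ are genuine endomorphisms (composition of endomorphisms), neither coefficient tuple is identically zero, so linear dependence is exactly equality in $\pp^N$, i.e. $\phi\circ\psi=\psi\circ\phi$ as maps. The only mild subtlety — and the step I expect to require the most care — is the bookkeeping that the vanishing of the $2\times 2$ minors is genuinely given by homogeneous forms in the $a_{iJ}$ on the nose (so that $Z$ is well-defined in $\pp^N$ rather than merely in an affine chart), together with noting that one does not even need $\phi$ to be an endomorphism for $Z$ to be closed, only for the identification of $Z\cap\Mor_d^n$ with $\Com(\phi,d)$ to be the clean one stated. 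Everything else is a routine unwinding of definitions.
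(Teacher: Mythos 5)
Your proof is correct and follows essentially the same approach as the paper: compose, expand the coefficients of $\phi\circ\psi$ and $\psi\circ\phi$ as polynomials in the $a_{iJ}$, express proportionality of the two coefficient tuples via homogeneous cross-product equations, and conclude that their common zero locus cut down to $\Mor_d^n$ is closed. The only cosmetic difference is that the paper imposes just the minors $(\psi\circ\phi)_0\,(\phi\circ\psi)_i=(\psi\circ\phi)_i\,(\phi\circ\psi)_0$ with reference index fixed at $0$, whereas you take the full collection of $2\times 2$ minors and explicitly note that on $\Mor_d^n$ neither coefficient tuple vanishes — a small extra precaution that makes the identification $Z\cap\Mor_d^n=\Com(\phi,d)$ manifest where the paper leaves it implicit.
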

    \begin{proof}
        Let $\phi = [\phi_0 , \cdots , \phi_n], \psi= [\psi_0 , \cdots , \psi_n]$ be endomorphisms on $\pp^n$ with $\psi \in \Com(\phi,d)$. Suppose
        \[
        \phi_i = \sum_\mathcal{J'} a_{iJ'} X^{J'}, \quad \psi_i = \sum_\mathcal{J} b_{iJ} X^J
        \]
        where
        $\mathcal{J'} = \{(j_0, \cdots, j_n) ~|~ \sum_{i=0}^n j_i = \deg \phi\},
        \mathcal{J} = \{(j_0, \cdots, j_n) ~|~ \sum_{i=0}^n j_i = d \}$ and $X^J = x_1^{j_0} \cdots x_n^{j_n}$.

        Let
        \[
        \phi \circ \psi = [(\phi \circ \psi)_0, \cdots, (\phi \circ \psi)_0].
        \]
        $\psi\in \Com(\phi,d)$ if and only if there is a rational function $f(X)$ such that $(\psi \circ \phi)_i = f(X) (\phi \circ \psi)_i$ for all $i=0, \cdots, n$. So, $(\psi \circ \phi)_0 \cdot (\phi \circ \psi)_i=  (\psi \circ \phi)_i \cdot (\phi \circ \psi)_0$. By comparing coefficients of each monomial of degree $d\cdot \deg \phi$, we have a system of homogeneous equations which determines a closed subset of $\Mor_d^n$. Thus, $\Com(\phi,d)$ is a finite intersection of Zariski closed subset of $\Mor_d^n$.
    \end{proof}

    \begin{thm}\label{irre}
    Let $\phi:\pp^n \rightarrow \pp^n$ be an endomorphism of degree $d>1$ and let $\{[\psi]_t~|~t \in T\}$ be an irreducible family in $\Mor_d^n$ such that $\phi \circ [\psi]_t = [\psi]_t \circ \phi$. Then, $[\psi]_s = [\psi]_t$ for all $s,t \in T$.
    \end{thm}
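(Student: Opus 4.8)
The plan is to show that every member of the family induces the same self-map of $\Pre(\phi)$, and then to upgrade this to equality on all of $\pp^n$ by the Zariski density of $\Pre(\phi)$. First I would record the needed facts about $\phi$: since $\deg\phi=d>1$, after fixing a model of $\phi$ over a finitely generated subfield of $\cc$, the set $\Pre(\phi)$ is of bounded height and Zariski dense in $\pp^n$ (Call--Silverman \cite{CS}, Fakhruddin \cite{Fa}, as recalled in the introduction). For each pair $m>l\geq 0$ the set $\Pre_{m,l}(\phi)=\{P : \phi^m(P)=\phi^l(P)\}$ is Zariski closed and contained in $\Pre(\phi)$, hence of bounded height; since a positive-dimensional closed subset of $\pp^n$ has points of arbitrarily large height, $\Pre_{m,l}(\phi)$ is a \emph{finite} set.

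Next, fix $m>l\geq 0$ and a point $P\in\Pre_{m,l}(\phi)$. Since $\phi\circ[\psi]_t=[\psi]_t\circ\phi$, Proposition~\ref{cm}(2) gives $[\psi]_t(P)\in\Pre_{m,l}(\phi)$ for every $t\in T$. The universal evaluation map $\Mor_d^n\times\pp^n\to\pp^n$, $(\psi,Q)\mapsto\psi(Q)$, is a morphism, so restricting it to $T\times\{P\}$ yields a morphism $T\to\pp^n$, $t\mapsto[\psi]_t(P)$; its image is irreducible because $T$ is, and it lies in the finite set $\Pre_{m,l}(\phi)$, hence is a single point. Thus $[\psi]_t(P)$ is independent of $t$. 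Letting $(m,l)$ and $P$ range over all admissible choices, all the maps $[\psi]_t$ agree on $\Pre(\phi)=\bigcup_{m>l\geq0}\Pre_{m,l}(\phi)$.

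Finally, for any $s,t\in T$ the morphisms $[\psi]_s,[\psi]_t:\pp^n\to\pp^n$ agree on the Zariski dense subset $\Pre(\phi)\subset\pp^n$, and the locus on which two morphisms to a separated scheme coincide is Zariski closed, so $[\psi]_s=[\psi]_t$. The point needing the most care is the middle step: one must read ``irreducible family in $\Mor_d^n$'' as an irreducible parameter variety $T$ together with its induced morphism to $\Mor_d^n\subset\pp^N$, so that composing with the universal evaluation map makes $t\mapsto[\psi]_t(P)$ genuinely algebraic; granting this and the finiteness from the first step, the rest of the argument is formal.
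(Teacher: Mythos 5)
Your proof is correct and follows essentially the same route as the paper's: establish that each level set of preperiodic points is finite, observe that every member of the family maps it to itself so that evaluating the family at a fixed preperiodic point gives a morphism from an irreducible variety into a finite set (hence constant), and conclude by Zariski density of $\Pre(\phi)$. The only minor divergence is that you derive finiteness of $\Pre_{m,l}(\phi)$ from the bounded-height property (a positive-dimensional closed subvariety would carry points of unbounded height), whereas the paper cites \cite{FS} directly for finiteness of $\Per_m(\phi)$; your explicit appeal to the evaluation morphism and separatedness also makes the paper's informal ``continuous map from a connected set to a discrete set'' step precise.
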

    \begin{proof}
    Since $\phi$ is an endomorphism of degree $d>1$, $\Per_m(\phi) = \Pre_{m,0}(\phi)$ is a finite set \cite[Theorem~3.1]{FS} and hence discrete. By Proposition~\ref{cm}, $(\psi)_t$ maps $\Per_m(\phi)$ to itself. Thus, we have a map
    \[
    \psi : \Per_{m}(\phi) \times T \rightarrow \Per_{m}(\phi).
    \]

    Choose a point $P\in \Per_{m}(\phi)$. Then we have continuous map from a connected set to totally discrete set:
    \[
    \psi(\cdot, P) :  T \rightarrow \Per_{m}(\phi).
    \]
    Therefore, $\psi(\cdot, P)$ is a constant map on every $\Per_m(\phi)$ and hence on $\Per(\phi)$. Becasue $\Per(\phi)$ is Zariski dense \cite[Theorem~5.1]{Fa}, irreducible family $\{[\psi]_t ~|~ t \in T\}$ has the same image on Zariski dense set $\Pre(\phi)$ and hence every member is the same map.
    \end{proof}

    Theorem~\ref{irre} is enough to prove Theorem~\ref{thmA}:

    \begin{thm}\label{thmA}
    Let $\phi:\pp^n \rightarrow \pp^n$ be an endomorphism of degree at least $2$, defined over $\cc$. Then,
    \[
    \Com(\phi,d): = \{ \psi :\pp^n \rightarrow \pp^n ~|~ \psi \circ \phi =  \phi \circ \psi, ~\deg g = d\}
    \]
    is a finite set.
    \end{thm}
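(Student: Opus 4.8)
The plan is simply to assemble Lemma~\ref{closed} and Theorem~\ref{irre}. By Lemma~\ref{closed}, $\Com(\phi,d)$ is a Zariski closed subset of $\Mor_d^n$, which is itself a Zariski open subvariety of $\pp^N$. Hence $\Com(\phi,d)$ is a Noetherian topological space, so it decomposes into finitely many irreducible components $\Com(\phi,d) = Z_1 \cup \cdots \cup Z_r$. If $\Com(\phi,d)$ is empty there is nothing to prove, so we may assume $r\geq 1$.

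Next I would fix an index $j$ and regard the irreducible variety $Z_j$ as an irreducible family $\{[\psi]_t ~|~ t \in Z_j\}$ inside $\Mor_d^n$, parametrized by its own points; by construction every such $[\psi]_t$ satisfies $\phi\circ[\psi]_t = [\psi]_t\circ\phi$, so this is exactly the situation of Theorem~\ref{irre}. That theorem then applies and yields $[\psi]_s = [\psi]_t$ for all $s,t \in Z_j$, i.e. $Z_j$ consists of a single point. Running this over $j = 1, \ldots, r$ shows that $\Com(\phi,d)$ is a finite set of at most $r$ points.

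The substantive part of the argument is entirely contained in Theorem~\ref{irre} — continuity of the induced action of the family on the finite (hence discrete) set $\Per_m(\phi)$ forces each family to be constant on a Zariski dense set of periodic points, hence to be a single map — together with the input facts that $\Per_m(\phi)$ is finite \cite[Theorem~3.1]{FS} and $\Per(\phi)$ is Zariski dense \cite[Theorem~5.1]{Fa}. The step performed here is only the formal passage from ``finitely many irreducible components, each a single point'' to ``finite set.'' The one thing worth checking is the harmless compatibility of the connectedness/discreteness argument in Theorem~\ref{irre} with the (reduced) structure of the components $Z_j$; since $\Per_m(\phi)$ is a finite set, this is immediate in either the Zariski or the analytic topology, so no difficulty arises. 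I therefore expect no genuine obstacle in this final assembly — the weight of the proof sits in Lemma~\ref{closed} and Theorem~\ref{irre}, which we are entitled to assume.
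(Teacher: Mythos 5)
Your proposal follows exactly the paper's own argument: Lemma~\ref{closed} gives Zariski closedness of $\Com(\phi,d)$ in $\Mor_d^n$, hence finitely many irreducible components, and Theorem~\ref{irre} collapses each component to a single map. The only difference is that you spell out the (routine) passage more carefully than the paper's two-line proof does.
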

    \begin{proof}
    By Lemma~\ref{closed}, $\Com(\phi,d)$ is a subvariety of $\Mor^n_d$ so that it consists of finitely many irreducible component. Theorem guarantees that we only have a unique map from each irreducible component so that $\Com(\phi,d)$ is a finite set.
    \end{proof}
    \begin{cor}
    Let $\phi:\pp^n \rightarrow \pp^n$ be an endomorphism of degree at least $2$ and $\{(\psi)_t:\pp^n \rightarrow \pp^n\}$ be a sequence of distinct endomorphisms such that
    $(\psi)_t \circ \phi = \phi  \circ (\psi)_t$ for all $t$. Then, $\deg (\psi)_t \to \infty$.
    \end{cor}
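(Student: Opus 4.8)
The plan is to deduce this immediately from Theorem~\ref{thmA} by a contradiction argument. Suppose the conclusion fails, i.e.\ $\deg(\psi)_t \not\to \infty$. Then there is a constant $D$ and an infinite subset $T_0$ of indices with $\deg(\psi)_t \leq D$ for all $t \in T_0$. Since there are only finitely many possible values $1, 2, \ldots, D$ for $\deg(\psi)_t$, the pigeonhole principle gives an infinite subset $T_1 \subseteq T_0$ and a fixed integer $d \leq D$ with $\deg(\psi)_t = d$ for every $t \in T_1$.

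Now for each $t \in T_1$ the endomorphism $(\psi)_t$ satisfies $(\psi)_t \circ \phi = \phi \circ (\psi)_t$ and $\deg(\psi)_t = d$, so $(\psi)_t \in \Com(\phi, d)$. By hypothesis the maps $(\psi)_t$ are pairwise distinct, so $\Com(\phi, d)$ contains the infinite set $\{(\psi)_t \mid t \in T_1\}$. But $\phi$ has degree at least $2$, so Theorem~\ref{thmA} applies and tells us $\Com(\phi, d)$ is finite. This contradiction proves $\deg(\psi)_t \to \infty$.

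There is essentially no obstacle here: the only point to keep in mind is that Theorem~\ref{thmA} gives finiteness of $\Com(\phi, d)$ for \emph{every} target degree $d$ (the hypothesis $\deg \phi \geq 2$ is on $\phi$, not on the commuting maps), which is exactly what is needed to handle the finitely many degree values appearing in a bounded subsequence. One could equivalently phrase the argument without pigeonhole by noting that $\bigcup_{d=1}^{D} \Com(\phi,d)$ is a finite union of finite sets, hence finite, and cannot contain the infinitely many distinct maps $(\psi)_t$ with $t \in T_0$.
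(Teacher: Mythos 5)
Your proof is correct and is the natural (indeed, the only reasonable) argument; the paper states this corollary without proof precisely because it follows from Theorem~\ref{thmA} by exactly the pigeonhole/finite-union argument you give.
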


    \begin{cor}\label{autoend}
        Let $\phi:\pp^n \rightarrow \pp^n$ be an endomorphism of degree at least $2$. Then,
        \[
        \operatorname{Aut}(\phi) = \{ \eta \in \operatorname{PGL}_{n+1}(\cc) ~|~ \eta \circ \phi \circ \eta^{-1} = \phi \}
        \]
        is a finite set.
    \end{cor}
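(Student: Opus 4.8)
The plan is to identify $\Aut(\phi)$ with the set $\Com(\phi,1)$ and then quote Theorem~\ref{thmA}. The observation that makes this work is that an endomorphism of $\pp^n$ of degree $1$ is exactly an element of $\PGL_{n+1}(\cc)$, and that for an invertible $\eta$ the relation $\eta\circ\phi\circ\eta^{-1}=\phi$ is equivalent to $\eta\circ\phi=\phi\circ\eta$; hence
\[
\Aut(\phi)=\{\psi\in\Mor_1^n ~|~ \psi\circ\phi=\phi\circ\psi\}=\Com(\phi,1),
\]
so the corollary is precisely the case $d=1$ of Theorem~\ref{thmA}.

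Next I would check that the argument behind Theorem~\ref{thmA} goes through for $d=1$. The embedding $\iota$ realizes $\Mor_1^n=\PGL_{n+1}$ as a Zariski open subset of $\pp^N$ with $N=(n+1)^2-1$; Lemma~\ref{closed} applies unchanged to show that $\Com(\phi,1)$ is a Zariski closed subset of $\Mor_1^n$, so it has finitely many irreducible components. Theorem~\ref{irre} applies because $\deg\phi\geq 2$ by hypothesis: each member of an irreducible family in $\Mor_1^n$ commuting with $\phi$ preserves the finite set $\Per_m(\phi)$ by Proposition~\ref{cm}, a continuous map from a connected parameter space into the discrete set $\Per_m(\phi)$ is constant, and $\Per(\phi)$ is Zariski dense, so the whole family consists of a single map. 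Combining, $\Com(\phi,1)$ is a union of finitely many irreducible components each contributing one point, hence finite; therefore $\Aut(\phi)$ is finite.

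There is no genuinely new obstacle here; all the work is contained in Theorem~\ref{irre}. The only point requiring (minor) care is the dictionary between the conjugation description and the commutation description of $\Aut(\phi)$, together with the remark that $d=1$ is permitted in the machinery of Section~3. Alternatively, one could argue directly: $\Aut(\phi)$ is a closed algebraic subgroup of $\PGL_{n+1}(\cc)$ whose identity component, being an irreducible family commuting with $\phi$, is trivial by Theorem~\ref{irre}; hence $\Aut(\phi)$ is a zero-dimensional closed subvariety of $\PGL_{n+1}$ and thus a finite set.
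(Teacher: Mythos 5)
Your proof is correct and follows the paper's own argument: identify $\Aut(\phi)$ with $\Com(\phi,1)$ via the equivalence of conjugation and commutation for invertible maps, then invoke Theorem~\ref{thmA}. The extra verification that the machinery of Section~3 applies at $d=1$, and the alternative phrasing via the identity component of the algebraic group $\Aut(\phi)$, are sound but do not change the substance of the argument.
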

    \begin{proof}
    It is true if $\eta$ is invertible:
    \[
    \eta \circ \phi \circ  \eta^{-1}  \quad \Leftrightarrow \quad  \eta \circ \phi = \phi \circ \eta.
    \]
    Thus, $\operatorname{Aut}(\phi) = \Com(\phi,1)$.
    \end{proof}

\section{Points in general position of degree $d$}

    In this section, we introduce another way of proving finiteness of commutable maps: we say that finitely many points $P_1, \cdots, P_N$ in $\pp^n$ are in general position if none of hyperplane of $\pp^n$ contains all $P_1, \cdots P_N$. It is well known fact that a projective linear map on $\pp^n$ is uniquely determined by images of points in general position. We generalize it for polynomial maps of higher degree to prove the polynomial map case for Theorem~A.

    \begin{thmA}[Points in general position of degree $d$ for polynomial maps]\label{det2}
        Let $d$ be a positive integer. Then, there is a set $S_d$ of finite points such that for any polynomial map $g: \cc^n \rightarrow \cc^n$ of degree $d$ is exactly determined by image of $S_d$.
    \end{thmA}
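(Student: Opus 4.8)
The plan is to recast the statement as a polynomial interpolation fact and then solve it by a greedy linear-algebra argument. Write a polynomial map with components of degree at most $d$ as $g=(g_1,\dots,g_n)$ with $g_i=\sum_{|J|\le d}c_{iJ}X^J$; these maps form a finite-dimensional $\cc$-vector space $W$, and every polynomial map of degree $d$ lies in $W$. Two maps $g,h\in W$ agree on a set $S$ if and only if each component $g_i-h_i$ of $g-h\in W$, which is again a polynomial of degree at most $d$ in $n$ variables, vanishes on $S$. Hence it suffices to produce a \emph{finite} set $S_d\subset\cc^n$ with the property that the only polynomial of degree at most $d$ vanishing identically on $S_d$ is the zero polynomial (equivalently, $S_d$ lies on no hypersurface of degree $\le d$): given such an $S_d$, if $g(P)=h(P)$ for all $P\in S_d$ then every $g_i-h_i$ vanishes on $S_d$, so $g_i=h_i$ and $g=h$.

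To construct $S_d$, let $m=\binom{n+d}{n}$ be the number of monomials $X^J$ with $\deg X^J\le d$, and consider the affine Veronese map $v:\cc^n\to\cc^m$, $v(P)=(P^J)_{|J|\le d}$. Since distinct monomials are linearly independent as functions on $\cc^n$, for every nonzero linear functional $\ell$ on $\cc^m$ the composite $\ell\circ v$ is a nonzero polynomial of degree at most $d$; in particular the image of $v$ is contained in no linear hyperplane of $\cc^m$. I then build the points of $S_d$ one at a time: having chosen $P_1,\dots,P_k$ with $v(P_1),\dots,v(P_k)$ linearly independent and $k<m$, their span is a proper subspace, so some nonzero $\ell$ vanishes on it; as $\ell\circ v\not\equiv 0$ there is a point $P_{k+1}$ with $\ell(v(P_{k+1}))\neq 0$, and then $v(P_1),\dots,v(P_{k+1})$ remain linearly independent. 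After $m$ steps $v(P_1),\dots,v(P_m)$ is a basis of $\cc^m$. Put $S_d=\{P_1,\dots,P_m\}$. A polynomial $p=\sum_J c_JX^J$ of degree at most $d$ vanishes on $S_d$ exactly when the coefficient vector $(c_J)$ is orthogonal to each $v(P_i)$; since the $v(P_i)$ span $\cc^m$ this forces $(c_J)=0$, which is the required property.

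I expect no genuine obstacle: the content is entirely the reduction to "find a poised interpolation set for degree-$d$ polynomials in $n$ variables,'' after which the greedy argument is elementary. The only points needing a little care are the degree bookkeeping (that the difference of two degree-$d$ maps still has degree $\le d$, so that it lies in $W$) and the fact that $\ell\circ v$ is genuinely a nonzero polynomial, which is just linear independence of monomials. With an eye toward the next section it is worth noting that the greedy construction never uses all of $\cc^n$: it goes through verbatim if every $P_i$ is required to lie in any prescribed subset $\Sigma\subset\cc^n$ on which no nonzero polynomial of degree $\le d$ vanishes, in particular any Zariski-dense $\Sigma$ such as $\Pre(f)$ for an $f$ as in the Main Theorem. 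This refined form is what lets one intersect $S_d$ with $\Pre(f)$ and thereby bound $\Com(f,d)$.
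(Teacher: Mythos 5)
Your proposal is correct and takes essentially the same route as the paper: both reduce the problem to finding a finite set whose Veronese image spans the space of coefficient vectors, and then use invertibility of the resulting matrix to recover $g$ from its values on $S_d$. The only cosmetic differences are that you use the affine Veronese to $\cc^m$ with an explicit greedy construction of the basis, while the paper uses the projective Veronese and derives existence of the independent points from the claim that $\tau(\cc^n)$ is not contained in a finite union of hyperplanes; your closing remark about restricting the greedy step to any Zariski-dense $\Sigma$ corresponds exactly to the paper's follow-up theorem on points in general position inside a Zariski-dense set.
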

    \begin{proof}
        Let $N$ be the number of monomials of degree of $n$ variables, whose degree is at most $d$. Consider the Veronese map
        \[
               \tau: \cc^n \rightarrow \pp^{N-1}(\cc),
               \quad (X_1, \cdots, X_n) \mapsto [1, X_1, \cdots, X_1^{d_{0I}}\cdots X_n^{d_{nI}}, \cdots, X_n^d].
        \]
        Remark that $\tau$ takes all possible monomials, of $n$ variables, of degree at most $d$. $\tau$ is a map between $\cc^n$ and $\cc^{N}$ so that the image of $\tau$ is a closed set.
        \begin{claim}\label{c1}
        $\tau(\cc^n)$ is not contained in a finite union of hyperplanes.
        \end{claim}
        \begin{proof}[Proof of Claim~\ref{c1}]
        Suppose that $\tau(\cc^n)$ is contained in a finite union of hyperplane. Then, the preimage of a hyperplane by $\tau$ is the zero set of a linear combination of monomials of degree at most $d$ so that the preimage of a hyperplane is a union of algebraic sets of degree at most $d$. However, $\cc^n$ is not covered by finitely many subvarieties of degree at most $d$.
        \end{proof}
        By Claim~\ref{c1}, we can find $N$ points $Q_1, \cdots, Q_N$ in the image of $\tau$ such that $Q_1, \cdots, Q_N$ are linearly independent. Take the set $S_d : = \{P_1 = \tau^{-1}(Q_1), \cdots, P_N = \tau^{-1}(Q_N)\}$. Let $\tau(S_d)$ is a matrix whose row vectors are $Q_1, \cdots, Q_N$. Then, it is invertible since $Q_1, \cdots, Q_N$ forma a basis:
        \[
        \tau(S_d) = \left( \begin{array}{c} \tau(P_1) \\ \vdots \\ \tau(P_N) \end{array} \right) = \left( \begin{array}{c} Q_1 \\ \vdots \\ Q_N \end{array} \right) \in \operatorname{GL}_N.
        \]

        Suppose $g := (g_{1}, \cdots, g_{n})$ where $g_i(X) = \sum a_{iJ} X^J$, $J=\{j_1, \cdots, j_n\}$ and $X^J = X_1^{j_1}\cdots, X_n^{j_n}$. Then, we have an equality between $n \times n $ matrices.
                \[
                {\tau}(X) \cdot \left(a_{iJ}\right)^t
                = \sum a_{iJ} X^J = g_i(X).
                \]
                Therefore,
                \[
                \tau(S_d) \cdot \left[\left(a_{1J}\right)^t, \cdots,  \left(a_{nJ}\right)^t \right]
                = \Bigl(
                    \sum a_{iJ}P_i^J
                \Bigr)_{i,j}
                = \Bigl(
                g_i(P_j)
                \Bigr)_{i,j}
                \]
              Since $\tau(S_d) $ is nonsingular, we can determine $\left[\left(a_{1J}\right)^t, \cdots,  \left(a_{nJ}\right)^t \right]$ uniquely.
    \end{proof}

    \begin{thm}\label{det2}
        Let $S$ be a Zariski dense subset of $\pp^n$. Then, for any $d\geq 1$, we can find a set of points in general position of degree $d$ in $S$.
    \end{thm}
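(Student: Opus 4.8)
The plan is to combine the abstract statement of Theorem~\ref{det2} (the existence of a finite witnessing set $S_d$ of points in general position of degree $d$) with the hypothesis that $S$ is Zariski dense, and to exhibit such an $S_d$ whose points all lie in $S$. Recall from the proof of the previous result that what one really needs is not a geometric ``general position'' condition on $n$-space but rather: a finite set $S_d = \{P_1,\dots,P_N\} \subset \cc^n$ such that the matrix $\tau(S_d)$ with rows $\tau(P_1),\dots,\tau(P_N)$ is invertible, where $\tau$ is the degree-$\leq d$ Veronese embedding. So the goal reduces to: \emph{find $N$ points in $S$ whose images under $\tau$ are linearly independent in $\cc^N$.}

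First I would build the points one at a time, greedily. Suppose we have already chosen $P_1,\dots,P_k \in S$ with $\tau(P_1),\dots,\tau(P_k)$ linearly independent, for some $k<N$. The span $W_k := \langle \tau(P_1),\dots,\tau(P_k)\rangle$ is a proper subspace of $\cc^N$, so it is cut out by (at least) one nonzero linear form $\ell$ on $\cc^N$. Pulling $\ell$ back along $\tau$ gives a nonzero polynomial $L := \ell\circ\tau$ on $\cc^n$ of degree at most $d$ — nonzero precisely because the coordinate functions of $\tau$ are the monomials of degree $\leq d$, which are linearly independent as functions on $\cc^n$ (this is the content of Claim~\ref{c1}: $\tau(\cc^n)$ lies in no hyperplane). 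The zero locus $Z(L)\subset\cc^n$ is then a proper Zariski-closed subset. Since $S$ is Zariski dense, $S\not\subset Z(L)$, so we may pick $P_{k+1}\in S\setminus Z(L)$; then $\tau(P_{k+1})\notin W_k$, so $\tau(P_1),\dots,\tau(P_{k+1})$ are linearly independent. Starting from $k=0$ and iterating $N$ times produces the desired $S_d\subset S$.

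Finally I would observe that this $S_d$ does the job for every polynomial map of degree $d$: by exactly the matrix computation in the proof of Theorem~\ref{det2} (abstract form), the coefficient vectors $\left[(a_{1J})^t,\dots,(a_{nJ})^t\right]$ of any such $g$ are recovered from $\tau(S_d)^{-1}$ applied to the matrix $\bigl(g_i(P_j)\bigr)_{i,j}$, so $g$ is determined by $g(S_d)$. Hence $S_d$ is a set of points in general position of degree $d$ contained in $S$.

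I do not expect a serious obstacle here; the one point that needs a little care is the reduction at the start — making explicit that ``in general position of degree $d$'' should be read through the Veronese picture as ``$\tau$-images linearly independent,'' rather than through any naive hyperplane condition in $\cc^n$ (a hypersurface of degree $d$, not a hyperplane, is the relevant obstruction). Once that is set up, the greedy construction is forced by Zariski density, and the rest is the already-established linear algebra. One should also note the implicit finiteness input: $N$, the number of monomials of degree $\leq d$ in $n$ variables, is finite, so the process terminates after finitely many steps.
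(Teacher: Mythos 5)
Your argument is correct and is essentially the same as the paper's: both reduce the statement to the observation that the Veronese-type map $\tau$ pulls hyperplanes back to degree-$\le d$ hypersurfaces, which cannot contain a Zariski dense set, so $\tau(S)$ spans $\cc^N$. The paper phrases this as a one-step contradiction ("if not, then $\tau(S)$ lies in a hyperplane $H$, but $\tau^{-1}(H)$ is a proper algebraic subset"), whereas you unwind it into the equivalent greedy construction; your version is more explicit but not a different method.
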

    \begin{proof}
        Suppose not; then there is a hyperplane $H$ on $\pp^N$ such that $\tau(S) \subset H$. However, $\tau^{-1}(H)$ is a zero set of a linear combination of monomials of degree $d$, which is a proper algebraic subset of $\pp^n$. It contradicts to $S$ is Zariski dense.
    \end{proof}

    \begin{thm}\label{thmB}
    Let $f:\cc^n \rightarrow \cc^n$ be a polynomial map such that $\Pre(f)$ is of bounded height and Zariski dense. Then,
    \[
    \Com(f,d): = \{ g :\af^n \rightarrow \af^n ~|~ f \circ g =  g \circ f, ~\deg g = d\}
    \]
    is a finite set.
    \end{thm}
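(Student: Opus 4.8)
The plan is to combine Theorem~A for polynomial maps with the dynamical constraint of Proposition~\ref{cm}~(2) and the finiteness of the period fibres $\Pre_{m,l}(f)$. First I would use Zariski density: since $\Pre(f)$ is Zariski dense in $\af^n$, the interpolation behind Theorem~A yields a \emph{finite} set $S_d=\{P_1,\dots,P_N\}\subset\Pre(f)$ in general position of degree $d$, i.e.\ with the Veronese matrix $\tau(S_d)$ invertible. Exactly as in the proof of Theorem~A, the coefficient vector of any polynomial map of degree $d$ is then recovered from its values at $P_1,\dots,P_N$ by multiplying by $\tau(S_d)^{-1}$; hence the evaluation map $g\mapsto\bigl(g(P_1),\dots,g(P_N)\bigr)$ is injective on polynomial maps of degree $d$, and it suffices to show that its image on $\Com(f,d)$ is finite.

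Next I would localise the values $g(P_i)$. For each $i$ fix $m_i>l_i\ge0$ with $P_i\in\Pre_{m_i,l_i}(f)$, which exists because $P_i\in\Pre(f)=\bigcup_{m>l\ge0}\Pre_{m,l}(f)$. If $g\in\Com(f,d)$, then $f\circ g=g\circ f$, so Proposition~\ref{cm}~(2) gives $g\bigl(\Pre_{m_i,l_i}(f)\bigr)\subset\Pre_{m_i,l_i}(f)$; in particular $g(P_i)\in\Pre_{m_i,l_i}(f)$. Thus the evaluation map sends $\Com(f,d)$ into $\prod_{i=1}^N\Pre_{m_i,l_i}(f)$, and the whole statement reduces to: each $\Pre_{m,l}(f)$ is finite.

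To see this, observe that $\Pre_{m,l}(f)$ is the algebraic subset of $\af^n$ defined by $f^m(X)=f^l(X)$. Since $\Pre(f)$ is Zariski dense and of bounded height, $f$ cannot have finite order, so $\deg f\ge2$ and $\deg f^m=(\deg f)^m\ne(\deg f)^l=\deg f^l$; hence $f^m\ne f^l$ and $\Pre_{m,l}(f)$ is a \emph{proper} closed subvariety of $\af^n$. A positive-dimensional subvariety (defined over the finitely generated field over which $f$ lives) contains points of arbitrarily large height, contradicting $\Pre_{m,l}(f)\subset\Pre(f)$ being of bounded height; so $\Pre_{m,l}(f)$ is zero-dimensional, hence finite. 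Therefore $\prod_{i=1}^N\Pre_{m_i,l_i}(f)$ is finite, and by injectivity of the evaluation map $\Com(f,d)$ is finite.

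The step I expect to be the main obstacle is this last one, converting ``bounded height'' into genuine finiteness of the fibres $\Pre_{m,l}(f)$: one has to be careful that the hypothesis is read with respect to a height attached to a finitely generated (e.g.\ number-field) model over which $f$, and hence each $\Pre_{m,l}(f)$, is defined, and then invoke that a positive-dimensional variety over such a field carries points of unbounded height. A minor secondary point is to confirm that the interpolation of Theorem~A really produces a \emph{unique} degree-$d$ map, so that the evaluation map is injective and the finiteness of $\prod_i\Pre_{m_i,l_i}(f)$ indeed forces $\Com(f,d)$ to be finite.
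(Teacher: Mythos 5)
Your proof follows essentially the same route as the paper: use Theorem~A together with Zariski density to extract a finite interpolating set $S_d\subset\Pre(f)$, then use Proposition~\ref{cm}(2) to confine $g(S_d)$ to the finite sets $\Pre_{m,l}(f)$, and conclude via injectivity of the evaluation map. The paper packages the bookkeeping a little differently, gathering $S_d$ into a single finite union $V_d=\bigcup_{m\le m_d}\bigcup_{l<m}\Pre_{m,l}(f)$ and bounding $|\Com(f,d)|$ by $\prod M_{m,l}^{M_{m,l}}$ with $M_{m,l}=|\Pre_{m,l}(f)|$, but the content is identical; you are in fact more explicit about the one step the paper leaves tacit, namely why each $\Pre_{m,l}(f)$ is finite.

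In that finiteness step there is a small but genuine flaw: the identity $\deg f^m=(\deg f)^m$ is false in general for polynomial self-maps of $\af^n$ (degree can drop under composition, e.g.\ $f(x,y)=(y,xy)$ has $\deg f=2$ but $\deg f^2=3$), so you cannot compare degrees to rule out $f^m=f^l$. The fix is immediate and avoids degrees entirely: if $f^m\equiv f^l$ as maps then $\Pre_{m,l}(f)=\af^n$, hence $\Pre(f)=\af^n$, which is never of bounded height over the finitely generated field of definition; so $\Pre_{m,l}(f)$ is a proper Zariski-closed subset, and your bounded-height argument then forces it to be zero-dimensional and hence finite, exactly as you wanted.
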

    \begin{proof}
        Let $S = \Pre(f)$. Then, by assumption, $S$ is Zariski dense so that we can find $S_d \subset S$. Moreover, $S_d$ is a finite set and $S = \bigcup_m \Pre_{m,l}(f)$ so that we can find $m_d$ such that $S_d \subset V_d : = \bigcup_{m=0}^{m_d} \bigcup_{l=0}^m \Pre_{m,l}(f)$. Proposition~\ref{cm} says $g(V_d) \subset V_d$ so that possible images of $S_d$ by $g\in \Com(f,d)$ are subsets of $V_d$. Therefore,
        \[
        \left|\Com (f,d) \right|  \leq \bigcup_{m=1}^{m_d} \bigcup_{l=0}^{m-1}  \bigl|\{ g : \Pre_{m,l}(f) \rightarrow \Pre_{m,l}(f) \}   \bigr| \leq \prod_{m=1}^{m_d} \prod_{l=0}^{m-1}  M_{m,l}^{M_{m,l}} < \infty
        \]
        where $M_{m,l}  = |\Pre_{m,l} (f)| $.
    \end{proof}

\section{Preperiodic points of polynomial maps of small $D$-ratio}

    Recall that an endomorphism of degree $d>1$ on $\pp^n$ satisfies that $\Pre(f)$ is Zariski dense and of bounded height. However, it is not true for polynomial maps in general. In such case, $\Com(f,d)$ can be an infinite set.

    \begin{ex}
        Let $f(x,y,z) = (y,x,z^2)$ and let $P(x,y)$ be a polynomial of degree $d$. Define
        \[
        g_P(x,y,z) := (P(x,y), P(y,x),z^d).
        \]
        Then,
        \[
        f \circ g_P = (P(y,x), P(x,y), z^{2d}) = g_P \circ f
        \]
        and hence $\Com(f,d)$ has uncountably many polynomial maps.
    \end{ex}

    \begin{prop}
    The following polynomial maps $f:\af^n \rightarrow \af^n$ have Zariski dense preperiodic points:
        \begin{enumerate}
            \item There is $m\geq 0$ such that $f^m$ is extended to an endomorphism $\widetilde{f^m} : \pp^n \rightarrow \pp^n$. \cite{Le2}.
            \item $f=(f_1, \cdots, f_n), f_i \in \cc[x_1, \cdots, x_i]$ for all $i= 1, \cdots, n$. \cite{MS}.
            \item $f$ is a regular polynomial automorphism. \cite{K2, Le}.
        \end{enumerate}
    \end{prop}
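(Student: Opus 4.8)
The plan is to treat the three families in turn; in each case Zariski density of $\Pre(f)$ is reduced to a density theorem already available in the literature, so the substance lies in setting up the reduction rather than in any new estimate, and throughout I keep in force the degree hypotheses that make each item non-vacuous (in $(1)$ that $m\geq 1$ and $\deg\widetilde{f^m}\geq 2$, in $(2)$ the degree conditions of \cite{MS}). For $(1)$ I would first record that $\Pre(f)=\Pre(f^m)$ as subsets of $\af^n$, since a point has finite forward $f$-orbit precisely when it has finite forward $f^m$-orbit; and since $f^m$ is a polynomial self-map of $\af^n$, the extension $\widetilde{f^m}$ satisfies $\widetilde{f^m}(\af^n)\subseteq\af^n$, so the $\widetilde{f^m}$-orbit of a point $P\in\af^n$ coincides with its $f^m$-orbit and hence $\Pre(\widetilde{f^m})\cap\af^n\subseteq\Pre(f^m)=\Pre(f)$. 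By Fakhruddin \cite{Fa}, refining Call--Silverman \cite{CS}, $\Pre(\widetilde{f^m})$ is Zariski dense in $\pp^n$; since the hyperplane at infinity $H$ is a proper closed subset of the irreducible variety $\pp^n$, the set $\Pre(\widetilde{f^m})\cap\af^n$ is still Zariski dense in $\af^n$ --- otherwise it would lie in some proper closed $W\subsetneq\af^n$, forcing $\Pre(\widetilde{f^m})\subseteq\overline{W}\cup H\subsetneq\pp^n$ --- and this is the argument of \cite{Le2}.

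For $(2)$, with $f=(f_1,\dots,f_n)$ triangular, I would induct on $n$, following Medvedev--Scanlon \cite{MS}. The case $n=1$ is the classical fact that a one-variable polynomial of degree at least $2$ has infinitely many, hence Zariski dense, preperiodic points in $\af^1$. For the inductive step, let $\pi\colon\af^n\to\af^{n-1}$ be projection onto the first $n-1$ coordinates; triangularity gives $\pi\circ f=\bar f\circ\pi$ with $\bar f=(f_1,\dots,f_{n-1})$ again triangular, so $\Pre(\bar f)$ is Zariski dense in $\af^{n-1}$ by induction. Over each periodic point $\bar a$ of $\bar f$ the last coordinate is governed by a one-variable ``first return'' polynomial $g_{\bar a}$, and when $\deg g_{\bar a}\geq 2$ its preperiodic points fill out the corresponding fibre densely; assembling these over the Zariski dense set of periodic base points $\bar a$ produces a Zariski dense subset of $\Pre(f)$. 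The degenerate fibres with $\deg g_{\bar a}\leq 1$ are handled exactly as in \cite{MS}, to which I would defer for the remaining bookkeeping.

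For $(3)$ I would invoke the canonical-height machinery of Kawaguchi \cite{K2} and \cite{Le}. A regular polynomial automorphism $f$ carries canonical heights $\widehat h^+$ and $\widehat h^-$ attached to $f$ and $f^{-1}$, with $\Pre(f)=\{\,\widehat h^++\widehat h^-=0\,\}$, and the Zariski density of this locus --- for $n=2$ the classical density of saddle periodic points of a \hn\ map --- is precisely what is established in \cite{K2, Le}; I would cite it directly rather than reprove it.

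The main obstacle is that none of the three density statements admits a soft proof: Fakhruddin's theorem for endomorphisms, the Medvedev--Scanlon analysis for skew products, and Kawaguchi's construction for regular automorphisms are the genuine input, and a self-contained account would amount to reproving them. Of the three reductions, $(2)$ is the one requiring the most care to present honestly, because of the lifting of preperiodic points along $\pi$ and the degenerate return maps; my intention is to give the reduction cleanly and quote \cite{MS} for the combinatorics rather than to carry it out in full.
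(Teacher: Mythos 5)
The paper states this Proposition without proof; it is a survey statement citing \cite{Le2}, \cite{MS}, and \cite{K2, Le}, so there is no argument in the text to compare against. Your reductions are correct and fill a genuine gap. In particular, your added hypothesis in item $(1)$ that $m\geq 1$ and $\deg\widetilde{f^m}\geq 2$ is a necessary repair: as printed, $m=0$ always satisfies the condition (the identity extends to $\pp^n$), and even with $m\geq 1$ an affine-linear $f$ extends to a degree-one endomorphism while $\Pre(f)$ need not be Zariski dense. Granting that repair, your chain $\Pre(f)=\Pre(f^m)$, $\widetilde{f^m}(\af^n)\subseteq\af^n$ so that $\Pre(\widetilde{f^m})\cap\af^n=\Pre(f^m)$, and the passage from Zariski density of $\Pre(\widetilde{f^m})$ in $\pp^n$ (Fakhruddin) to density in $\af^n$ by excising the hyperplane at infinity, is correct and complete. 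Items $(2)$ and $(3)$ are appropriately deferred to \cite{MS} and \cite{K2, Le} respectively, which is exactly what the paper itself does.
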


    Here's another example, if $f$ is  a polynomial map whose
    $D$-ratio is bounded by $\deg f$, then $\Pre(f)$ is Zariski
    dense.

\begin{thm}\label{Zdense}
    Let $K$ be a finitely generated field over $\mathbb{Q}$ and let $f: \af^n \rightarrow
\af^n$ be a polynomial map, defined over $K$, such that $r(f) <\deg f$. Then, the
set of preperiodic point is Zariski dense.
\end{thm}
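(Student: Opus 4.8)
The plan is to promote the height bound that comes for free from the $D$-ratio hypothesis to a Zariski density statement by producing a Zariski dense set of periodic points, and the natural way to do that is to pass to a projective model on which $f$ becomes a polarized endomorphism and then apply a density theorem of Fakhruddin type. By \cite[Theorem~5.3]{Le2} the hypothesis $r(f)<\deg f$ already yields that $\Pre(f)$ is of bounded height; more precisely it yields a canonical height $\widehat h_f\geq 0$ with $\widehat h_f\circ f=(\deg f)\,\widehat h_f$, with $\widehat h_f-h$ bounded for an ample Weil height $h$ on $\pp^n$, and with $\Pre(f)=\{P:\widehat h_f(P)=0\}$; so the theorem is entirely a density statement.

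Next I would invoke the $D$-ratio construction of \cite{Le2}: resolving the indeterminacy of the rational extension $\pp^n\dashrightarrow\pp^n$ of $f$ by blow-ups supported in the hyperplane at infinity produces a normal projective variety $X$ containing $\af^n$ as a dense open subset, on which (after possibly replacing $f$ by an iterate and $X$ by a further blow-up) $f$ extends to a surjective endomorphism $\bar f\colon X\to X$. The role of the \emph{strict} inequality $r(f)<\deg f$ is that the limiting class $D:=\lim_m (\deg f)^{-m}(\bar f^m)^*H$ of normalized pullbacks of an ample class $H$ on $X$ is not merely nef with $\bar f^*D\equiv(\deg f)\,D$ but in fact big, and restricts to an ample class on $\af^n$; its associated height is $\widehat h_f$. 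Replacing $f$ by an iterate costs nothing, since $r(f^m)/\deg f^m\leq\bigl(r(f)/\deg f\bigr)^m<1$ by \cite[Proposition~4.5]{Le2} and $\Per(f^m)=\Per(f)\subset\Pre(f)$.

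With $(X,\bar f,D)$ in hand I would apply the theorem of Fakhruddin \cite[Theorem~5.1]{Fa} on Zariski density of the periodic points of a polarized surjective endomorphism; in the present nef-and-big setting one either descends along the semiample (Iitaka) reduction of $D$ to the ample model, or uses the equidistribution of $\widehat h_f$-small points \cite{Y} together with Zhang's successive-minima inequality and the vanishing $\widehat h_f(X)=0$ (a consequence of $\bar f^*D\equiv(\deg f)D$ once $\deg f\geq 2$). Either way $\Per(\bar f)$ is Zariski dense in the irreducible variety $X$, so it meets the dense open subset $\af^n$ in a set that is Zariski dense in $\af^n$; and $\Per(\bar f)\cap\af^n\subset\Per(f)\subset\Pre(f)$ because $f(\af^n)\subset\af^n$. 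Hence $\Pre(f)$ is Zariski dense in $\af^n$.

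The crux is the second step: showing that $r(f)<\deg f$ genuinely forces the limiting class $D$ on the projective model to be big, rather than merely nef and nonzero, and checking that the resulting polarized system --- on a possibly singular $X$, with $D$ big but not ample --- really is one to which a Fakhruddin-type density statement applies (including the point that a single iterate of $f$ suffices to realize $\bar f$ as an endomorphism of such an $X$). A route that avoids an explicit endomorphism model would be to argue directly that the non-preperiodic locus cannot be all of $\af^n$ through an adelic-metric and equidistribution argument for $\widehat h_f$; this still rests on the same big-ness input.
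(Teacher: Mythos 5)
Your approach is genuinely different from the paper's. The paper proves Zariski density by a good-reduction / integral-model argument: it reduces to the number-field case by induction on transcendence degree, spreads $f$ out over $\operatorname{Spec}\ox_K$, observes that over every residue field of good reduction all points of the special fiber are preperiodic, and then argues about the Zariski closure of $\bigcup\Pre_{m,l}(f)$ in the integral model. No projective compactification of $f$ enters at all. You instead try to build a projective model $(X,\bar f,D)$ on which (an iterate of) $f$ becomes a polarized surjective endomorphism with a big nef eigenclass, and then invoke a Fakhruddin-type density theorem or equidistribution of small points.

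The step you yourself flag as ``the crux'' is exactly where the proposal fails. The hypothesis $r(f)<\deg f$ does not produce a normal projective $X\supset\af^n$ with a surjective endomorphism $\bar f\colon X\to X$ extending $f$ or any iterate of it. The $D$-ratio construction in \cite{Le2} resolves the indeterminacy of $f\colon\pp^n\dashrightarrow\pp^n$ to a morphism $\tilde f\colon X\to\pp^n$, but this is not a self-map of $X$, and even after replacing $f$ by an iterate and $X$ by a further blow-up one does not in general obtain an algebraically stable model, let alone an honest endomorphism. Indeed, the paper's own Proposition in Section~5 lists ``some iterate of $f$ extends to an endomorphism of $\pp^n$'' as one already-known sufficient condition for density, and then presents $r(f)<\deg f$ as a \emph{new} example precisely because it is not subsumed by the endomorphism case; if your extension step were available, Theorem~\ref{Zdense} would add nothing. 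A secondary gap, which you also acknowledge, is that Fakhruddin's density theorem requires polarization by an \emph{ample} class, and the passage from a big nef eigenclass on a possibly singular $X$ to that setting (via the Iitaka reduction, or via equidistribution together with Zhang's inequalities) is substantial and not supplied. As written, the proposal does not establish the theorem.
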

\begin{proof}
    Note that we will prove only for number field case since we can prove the general case by induction on the transcendental degree: if we consider an integral model of $\pp^n$ over $\operatorname{Spec}\ox_K$, then every special fiber is defined over a field $K'$ such that $\operatorname{tr.deg}K' = \operatorname{tr.deg}K -1$.

    Suppose that $K$ is a number field and consider an integral model $\mathcal{A}$ of $\af^n$ over $\operatorname{Spec} \ox_K$.

    Let $\mathcal{U}\subset M_K$ be the set of prime places at which $f$ has good reduction and let $\mathcal{P}_{m,l}$ be the closure of $\Pre_{m,l}(f)$ in $\mathcal{A}$. Then, for all $v\in \mathcal{U}$, $\mathcal{A}_v \cap \mathcal{P}_{m,l}$ is not empty and hence $\mathcal{P}_{m,l}$ is a nontrivial subvarieties of of dimension at least 1. But, $\mathcal{A}_v \cap \mathcal{P}_{m,l}$ is a fintie set of points so that $\mathcal{P}_{m,l}$ consists of finitely many vertical fibers and finitely many horizontal sections. Therefore, $\bigcup \mathcal{P}_{m,l}$ contains infinitely many horizontal sections whose intersection with all but finitely many special fibers are whole fiber. Hence, $\bigcup \mathcal{P}_{m,l}$ consists of infinitely many horizontal sections whose intersection with all fiber, especially with the generic
fiber, is Zariski dense.
\end{proof}

    \begin{cor}\label{K}
        Let $f:\cc^n \rightarrow \cc^n$ be a polynomial map such that $r(f) < \deg f$. Then Theorem~\ref{thmB} holds for~$f$.
    \end{cor}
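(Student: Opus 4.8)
The plan is to observe that Corollary~\ref{K} is nothing more than a combination of Theorem~\ref{Zdense} with Theorem~\ref{thmB}, once the hypotheses of the latter have been checked for $f$. So the first step is to reduce to a field of definition: a single polynomial map $f:\cc^n\to\cc^n$ has only finitely many coefficients, which generate a finitely generated field $K$ over $\qq$, and $f$ is defined over $K$. With this in hand, Theorem~\ref{Zdense} applies verbatim and shows that $\Pre(f)$ is Zariski dense in $\af^n$; since $\af^n_\cc$ is obtained from $\af^n_K$ by base change, Zariski density over $K$ is the same as Zariski density over $\cc$.

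The second step is the bounded-height hypothesis. Here I would cite \cite[Theorem 5.3]{Le2} exactly as it was invoked in Section~2: the assumption $r(f)<\deg f$ forces $\Pre(f)$ to be a set of bounded height with respect to a suitable height on $\overline{K}$. Combining the two inputs, $f$ is a polynomial map over $\cc$ whose preperiodic locus is simultaneously of bounded height and Zariski dense, so Theorem~\ref{thmB} applies directly and yields that $\Com(f,d)$ is a finite set.

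The only point that deserves care — and what I would flag as the (mild) obstacle — is confirming that the bounded-height hypothesis delivers what is actually used inside the proof of Theorem~\ref{thmB}, namely that each $\Pre_{m,l}(f)$ is a finite set so that the quantities $M_{m,l}$ are finite and the bound $|\Com(f,d)|\le\prod_{m,l}M_{m,l}^{M_{m,l}}<\infty$ makes sense. This is because $\Pre_{m,l}(f)$ is the Zariski-closed solution set of the polynomial system $f^m(X)=f^l(X)$ over $K$; if it had positive dimension it would contain a curve and hence points of arbitrarily large height, contradicting bounded height. Therefore $\Pre_{m,l}(f)$ is zero-dimensional, hence a finite set of $\overline{K}$-points, and the argument of Theorem~\ref{thmB} goes through without change, completing the proof.
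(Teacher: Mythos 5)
Your argument is correct and is exactly the route the paper intends: the corollary is stated without proof precisely because it is the combination of Theorem~\ref{Zdense} (Zariski density) with the bounded-height result \cite[Theorem 5.3]{Le2}, followed by an application of Theorem~\ref{thmB} over the finitely generated field of definition. Your extra remark that bounded height forces each $\Pre_{m,l}(f)$ to be zero-dimensional, hence finite, is a sound and welcome clarification of what Theorem~\ref{thmB} actually uses, but it does not change the approach.
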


    \begin{ex}
    Let
    \[
    f(x,y) = (x^3+y, x+y^2).
    \]
    Then, $r(f) = \dfrac{3}{2} < 3 = \deg f$ so that $\Pre(f)$ is
    Zariski dense. Therefore, $\Com(f,d)$ is a finite set of all $d\geq 1$.
    \end{ex}

\end{document}